 \newtheorem{theorem}{Theorem}[section]
 \newtheorem{definition}[theorem]{Definition}
 \newtheorem{hypothesis}[theorem]{Hypothesis}
 \newtheorem{lemma}[theorem]{Lemma}
 \newtheorem{proposition}[theorem]{Proposition}
 \newtheorem{corollary}[theorem]{Corollary}
 \newtheorem{example}[theorem]{Example}
 \newtheorem{Prop}[theorem]{Proposition}
 \def\blemma{\begin{lemma}\sl{}\def\elemma{\end{lemma}}}
 \def\bproposition{\begin{proposition}\sl{}\def\eproposition{\end{proposition}}}
 \def\btheorem{\begin{theorem}\sl{}\def\etheorem{\end{theorem}}}
 \def\beqlb{\begin{eqnarray}}\def\eeqlb{\end{eqnarray}}
 \def\beqnn{\begin{eqnarray*}}\def\eeqnn{\end{eqnarray*}}
 \def\mbf{\mathbf}
 \def\mcr{\mathscr}
 \def\mbb{\mathbb}
 \def\const{\mathrm{const}}
 \def\proof{\noindent{\it Proof.~~}}
 \def\qed{\hfill$\Box$\medskip}
\newcommand{\bcen}{\begin{center}}
\newcommand{\ecen}{\end{center}}
\newcommand{\bgeqn}{\begin{equation}}
\newcommand{\edeqn}{\end{equation}}
\newcommand{\lan}{\langle}
\newcommand{\ran}{\rangle}
\def\az{\alpha}
\def\sz{\sigma}
\def\dz{\delta}
\def\lz{\lambda}
\def\ra{\rangle}
\def\la{\langle}
\begin{document}

\noindent\textit{(Version: Jan/04/2012)}

\bigskip\bigskip

\noindent{\Large\textbf{Branching Particle Systems in Spectrally}}

\smallskip

\noindent{\Large\textbf{One-sided L\'evy Processes} \footnote{Supported
by NSFC (11071021, 11131003, 11126037), 985 Project and NSERC grants.}}

\bigskip

\noindent{Hui He, Zenghu Li and Xiaowen Zhou}

\bigskip

\noindent{\it Beijing Normal University and Concordia University}

\bigskip\bigskip

{\narrower{\narrower

\centerline{\bf Abstract}

\bigskip

We investigate the branching structure coded by the excursion above zero
of a spectrally positive L\'evy process. The main idea is to identify the
level of the L\'evy excursion as the time and count the number of jumps
upcrossing the level. By regarding the size of a jump as the birth site of
a particle, we construct a branching particle system in which the
particles undergo nonlocal branchings and deterministic spatial motions to
the left on the positive half line. A particle is removed from the system
as soon as it reaches the origin. Then a measure-valued Borel right Markov
process can be defined as the counting measures of the particle system.
Its total mass evolves according to a Crump-Mode-Jagers branching process
and its support represents the residual life times of those existing
particles. A similar result for spectrally negative L\'evy process is
established by a time reversal approach. Properties of the measure-valued
processes can be studied via the excursions for the corresponding L\'evy
processes.

\bigskip\bigskip

\noindent\textit{AMS 2010 subject classifications}: Primary 60J80, 60G51;
Secondary 60J68.

\bigskip

\noindent{Keywords:} L\'evy process, spectrally one-sided, subordinator,
branching particle system, non-local branching, Crump-Mode-Jagers
branching process.

\par}\par}


\section{Introduction}

\setcounter{equation}{0}

Branching processes embedded in processes with independent increments have
been studied by many authors. The study yields detailed information and
understandings in the two classes of processes. In particular, Dwass
\cite{[Dw75]} constructed branching processes from simple random walks. To
study random walks in random environment Kesten et al \cite{[KKS75]}
constructed a Galton-Watson process with geometric offspring law from a
simple random walk. Multitype branching processes have also been
introduced in the study of random walks in random environment; see
\cite{[HW09], [HZ10], [Key87]} and the references therein. Since
continuous state branching processes and Brownian motions arise as the
scaling limits of Galton-Watson processes and simple random walks,
respectively, we may naturally expect some branching structures embedded
in a Brownian motion. The well-known Knight-Ray theorem brings an answer
to this question; see also \cite{[Le89], [NP89]}.

Le Gall and Le Jan \cite{[LL98a], [LL98b]} recovered a deep connection
between general continuous state branching processes and spectrally
positive L\'evy processes. Furthermore, Duquesne and Le Gall
\cite{[DuL02], [DuL05]} showed that the branching points of a L\'evy tree
constructed in \cite{[LL98a]} are of two types: binary nodes (i.e. vertex
of degree three), which are given by the Brownian part of the L\'evy
process, and infinite nodes (i.e. vertex of infinite degrees), which are
given by the jumps of the L\'evy process. The size of the jump is also
called the size of the corresponding infinite node (or the mass of the
forest attached to the node).

In the interesting recent work \cite{[L10]}, Lambert used spectrally
positive L\'evy processes for the first time to code random splitting
trees. In the population dynamics represented by the splitting tree, the
number of individuals evolves according to a binary Crump-Mode-Jagers
process. It was proved in \cite{[L10]} that the contour process of the splitting
tree truncated up to a certain level is a spectrally positive L\'evy
process reflected below this level and killed upon hitting zero. From this
result Lambert derived a number of properties of the splitting tree and
the Crump-Mode-Jagers process.

The purpose of this paper is to give a formulation of the branching
structures of spectrally one-sided L\'evy processes in terms of
measure-valued processes, which we call \textit{single-birth branching
particle systems}. Those structures are undoubtedly conveyed by the
random splitting trees, so we could have derived the results from those
of Lambert \cite{[L10]}. However, we think a simple construction of the
branching particle systems directly from the L\'evy process is of
interest. In addition, we show that the branching systems are Borel right
Markov processes in a suitable state space and characterize their
transition semigroups using some simple quasi-linear integral equations.
Those properties make the branching systems  easier to handle
than the Crump-Mode-Jagers processes. A more precise description of the
branching structures is given in the next paragraph.

Let us consider a typical trajectory of the spectrally positive L\'evy
process with negative drift $\{S_t: t\ge 0\}$ started from $a>0$ and
killed upon hitting zero; see Figure $1'$. Let $\{y_i: i=1,2,3\}$ denote
the sizes of jumps. Then the sample path of a branching particle system
can be obtained in the following way: At time zero, an ancestor starts
off from $a>0$ and moves toward the left at the unit speed. At times
$z_1$ and $z_3$, it gives birth to two children at positions $y_1$ and
$y_3$, respectively. At time $z_2$, the first child of the ancestor gives
birth to a child at position $y_2$. Once an individual hits zero, it is
removed from the system. So the ancestor dies at time $a$ and its two
children die at times $z_1+y_1$ and $z_3+y_3$, respectively.

From the structures described above, we use a time reversal to derive a
similar result for spectrally negative L\'evy processes with positive
drift. We will see that the branching systems we encounter here are
actually very special cases of the models studied in \cite{[DGL02],
[Li11]}. Unfortunately, by now we can only treat L\'evy processes with
bounded variations as in \cite{[L10]}. An interesting open question is to
give a description of the branching structures of general spectrally
one-sided L\'evy processes in terms of measure-valued branching
processes. We hope to see the precise formulation of such structures in
the future.

\includegraphics[width=0.8\textwidth,height=0.3\textheight]{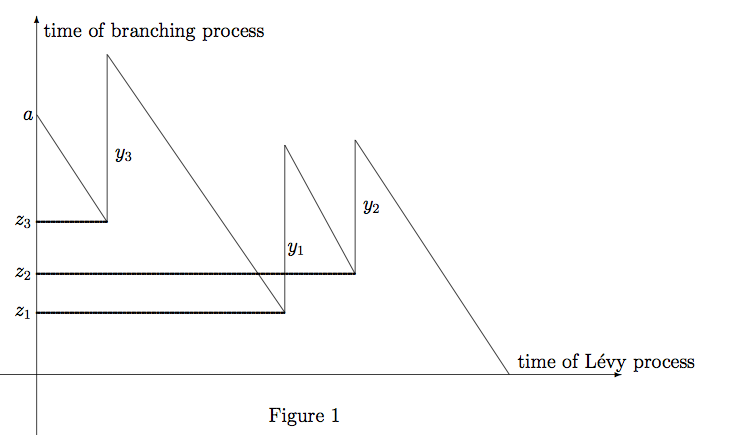}

The rest of this paper is arranged as follows. In Section 2, we introduce
some branching particle systems on the positive half line involving
nonlocal branching structures. In Section 3, we extend the model to the
case with infinite branching rates. In Section 4 the result on the
branching structures in spectrally positive L\'evy processes with negative
drift is established. In Section 5, we derive the branching structures for
spectrally negative L\'evy processes with positive drift by a time
reversal approach. Some properties of our branching systems are studied in
Section 6. In Section 7, we discusses briefly the connection of the
branching systems with the Crump-Mode-Jagers models.

\medskip

\textbf{Notations.} Write $\mbb{R}_+ = [0,\infty)$. Given a metric space
$E$, we denote by $B(E)$ the Banach space of bounded Borel functions on
$E$ endowed with the supremum/uniform norm ``$\|\cdot\|$''. Let $C(E)$ be
the subspace of $B(E)$ consisting of bounded continuous functions on $E$.
We use the superscript ``$+$'' to denote the subset of positive elements
of the function spaces, e.g., $B^+(\mbb{R}_+)$ and $C^+(0,\infty)$. Let
$M(E)$ denote the space of finite Borel measures on $E$ endowed with the
topology of weak convergence. Let $N(E)$ be the set of integer-valued
measures in $M(E)$. For a measure $\mu$ and a function $f$ on $E$ write
$\la\mu,f\ra = \int fd\mu$ if the integral exists. Other notations will
be explained when they first appear.


\section{Branching systems on the positive half line}

\setcounter{equation}{0}

We begin with the description of a branching system of particles on
$\mbb{R}_+$. Suppose that $\alpha>0$ is a constant, $\eta = \eta(dx)$ is
a probability measure on $(0,\infty)$ and $g=g(z)$ is a probability
generating function with $g'(1)< \infty$. Let $\{\xi_t: t\ge 0\}$ be the
$\mbb{R}_+$-valued Markov process defined by $\xi_t:=(\xi_0-t)\vee 0$.
Let $F(0,\cdot)$ be the unit mass at $\delta_{0}\in N (\mbb{R}_+)$. For
$x>0$, let $F(x,\cdot)$ be the distribution on $N(\mbb{R}_+)$ of the
random measure
 \beqnn
\delta_{x}+\sum_{i=1}^Z\delta_{Y_i},
 \eeqnn
where $Z$ is an integer-valued random variable with distribution
determined by $g=g(z)$ and $\{Y_1, Y_2, \cdots\}$ are i.i.d random
variables on $(0,\infty)$ with distribution $\eta(dx)$. Here we assumed
that $Z$ and $\{Y_1,Y_2,\cdots\}$ are independent.

Suppose that we have a set of particles on $\mbb{R}_+$ moving
independently according to the law of $\{\xi_t: t\ge 0\}$. A particle is
frozen as soon as it reaches zero. Before that at each
$\alpha$-exponentially distributed random time, the particle gives birth
to a random number of offspring according to the law specified by the
generating function $g=g(z)$, and those offspring are scattered over
$\mbb{R}_+$ independently according to the distribution $\eta(dx)$. It is
assumed as usual that the reproduction of different particles are
independent of each other. Let $\bar{X}_t(B)$ denote the number of
particles in the set $B\in \mcr{B}(\mbb{R}_+)$ at time $t\ge 0$. By
Dawson et al \cite[p.103]{[DGL02]} one can see that $\{\bar{X}_t: t\ge
0\}$ is a Markov process on $N(\mbb{R}_+)$ with transition semigroup
$(\bar{Q}_t)_{t\ge 0}$ defined by
 \beqlb\label{2.1}
\int_{N(0,\infty)} e^{-\la\nu,f\ra} \bar{Q}_t(\mu,d\nu)
 =
\exp\{-\la\mu,\bar{U}_t f\ra\}, \qquad f\in B^+(\mbb{R}_+),
 \eeqlb
where $(t,x)\mapsto \bar{U}_tf(x)$ is the unique positive
solution of
 \beqnn
e^{-\bar{U}_t f(x)}
 &=&
e^{-f((x-t)\vee 0)} - \alpha\int_0^t e^{-\bar{U}_{t-s}f((x-s)\vee
0)} d s + \alpha\int_0^t e^{-\bar{U}_{t-s}f(0)}1_{\{x\le s\}} d
s \cr
 &&\qquad\qquad
+\, \alpha\int_0^t e^{-\bar{U}_{t-s}f(x-s)}1_{\{x>s\}} g(\la\eta,
e^{-\bar{U}_{t-s}f}\ra) d s;
 \eeqnn
see also Dawson et al \cite[pp.95-96]{[DGL02]} and Li
\cite[p.98]{[Li11]}. By Proposition~2.9 of \cite{[Li11]}, the above
equation can be rewritten as
 \beqlb\label{2.2}
e^{-\bar{U}_t f(x)}
 =
e^{-f((x-t)\vee 0)} - \alpha\int_0^t
e^{-\bar{U}_{t-s}f(x-s)}1_{\{x>s\}} \big[1 - g(\la\eta,
e^{-\bar{U}_{t-s}f}\ra)\big] ds.
 \eeqlb
By Proposition~A.49 of \cite{[Li11]}, for $f\in B(\mbb{R}_+)$ there is a
unique locally bounded solution $(t,x)\mapsto \bar{\pi}_tf(x)$ to the
equation
 \beqlb\label{2.3}
\bar{\pi}_t f(x) = f((x-t)\vee 0) + \alpha g'(1)\int_0^t 1_{\{x>s\}}\la
\eta,\bar{\pi}_{t-s} f\ra ds.
 \eeqlb
Moreover, the linear operators $(\bar{\pi}_t)_{t\ge 0}$ on $B(\mbb{R}_+)$
form a semigroup and
 \beqlb\label{2.4xx}
\|\bar{\pi}_tf\|\le \|f\|e^{\alpha g'(1)t}, \qquad t\ge 0.
 \eeqlb

\bproposition\label{t2.1} For $t\ge 0$ and $f\in B^+(\mbb{R}_+)$ we have
$\bar{U}_tf\le \bar{\pi}_tf$ and for $t\ge 0$ and $f\in B^+(\mbb{R}_+)$
we have
 \beqlb\label{2.4}
\int_{N(\mbb{R}_+)}\la\nu,f\ra \bar{Q}_t(\mu,d\nu)
 =
\la\mu,\bar{\pi}_tf\ra.
 \eeqlb
\eproposition

\proof For $t\ge 0$ and $f\in B^+(\mbb{R}_+)$ one can use (\ref{2.2}) and
(\ref{2.3}) to see
 \beqnn
\bar{\pi}_t f(x) = \frac{\partial}{\partial \theta}
\bar{U}_t(\theta f)(x)\Big|_{\theta=0}.
 \eeqnn
Then (\ref{2.4}) follows by differentiating both sides of (\ref{2.2}). By
(\ref{2.1}), (\ref{2.4}) and Jensen's inequality it is clear that
$\bar{U}_tf(x)\le \bar{\pi}_tf(x)$ for $x\ge 0$. By linearity we also
have (\ref{2.3}) and (\ref{2.4}) for $f\in B(\mbb{R}_+)$. \qed

\bproposition\label{t2.3} For any $f\in B^+(\mbb{R}_+)$ the mapping
$t\mapsto \bar{U}_tf(\cdot+t)$ from $[0,\infty)$ to $B^+(\mbb{R}_+)$ is
increasing and locally Lipschitz in the supremum norm. Moreover, for any
$t\ge r\ge 0$ we have
 \beqlb\label{2.5}
0\le e^{-\bar{U}_rf(x+r)} - e^{-\bar{U}_t f(x+t)}\le \alpha (t-r).
 \eeqlb
\eproposition

\proof For any $t,x\ge 0$ one can use (\ref{2.2}) to see
 \beqnn
e^{-\bar{U}_t f(x+t)}
 =
e^{-f(x\vee 0)} - \alpha\int_0^t e^{-\bar{U}_sf(x+s)}1_{\{x+s>0\}} \big[1
- g(\la\eta, e^{-\bar{U}_sf}\ra)\big] ds.
 \eeqnn
Then we have (\ref{2.5}). Since $t\mapsto \bar{U}_tf$ is locally bounded
by Proposition~\ref{t2.1}, we see $t\mapsto \bar{U}_tf(\cdot+t)$ is
increasing and locally Lipschitz in the supremum norm. \qed

\bproposition\label{t2.4} For any $f\in B^+(\mbb{R}_+)$ the function
$(t,x)\mapsto \bar{U}_tf(x)$ is the unique locally bounded positive
solution of
 \beqlb\label{2.6}
\bar{U}_tf(x) = f((x-t)\vee0) + \az\int_0^t 1_{\{x>s\}} \big[1 -
g(\la\eta,e^{-\bar{U}_{t-s}f}\ra)\big]ds.
 \eeqlb
\eproposition

\proof For notational convenience, in this proof we set $f(x) = f(0)$ and
$\bar{U}_tf(x) = \bar{U}_tf(0)$ for all $x\le 0$ and $t\ge 0$. Let
$0=t_0<t_1<\cdots<t_n=t$ be a partition of $[0,t]$. For $x\in\mbb{R}_+$,
we can write
 \beqlb\label{2.7}
\bar{U}_tf(x) = f(x-t) + \sum_{i=1}^n \Big[\bar{U}_{t-t_{i-1}}
f(x-t_{i-1}) - \bar{U}_{t-t_i}f(x-t_i)\Big].
 \eeqlb
Note that Proposition \ref{t2.4} implies $\bar{U}_{t-t_{i-1}}f(x-t_{i-1})
- \bar{U}_{t-t_i}f(x-t_i)\geq0$. By (\ref{2.2}), (\ref{2.5}) and Taylor's
formula, as $t_i-t_{i-1}\to 0$,
 \beqnn
&&\Big[\bar{U}_{t-t_{i-1}}f(x-t_{i-1}) - \bar{U}_{t-t_i}f(x-t_i)\Big] \cr
 &&\qquad
= e^{\bar{U}_{t-t_{i-1}}f(x-t_{i-1})} \Big[
e^{-\bar{U}_{t-t_i}f(x-t_i)} - e^{-\bar{U}_{t-t_{i-1}}
f(x-t_{i-1})}\Big] + o(t_i-t_{i-1}) \cr
 &&\qquad
= \int_0^{t_i-t_{i-1}} \big[1+\varepsilon_i(s,x)\big]
1_{\{x-t_{i-1}>s\}} \big[1 - g(\la \eta,
e^{-\bar{U}_{t-t_{i-1}-s}f}\ra)\big] ds + o(t_i-t_{i-1}),
 \eeqnn
where
 \beqnn
\varepsilon_i(s,x)
 =
e^{\bar{U}_{t-t_{i-1}}f(x-t_{i-1})} \Big[e^{-\bar{U}_{t-t_{i-1}-s}
f(x-t_{i-1}-s)} - e^{-\bar{U}_{t-t_{i-1}} f(x-t_{i-1})}\Big].
 \eeqnn
By Propositions~\ref{t2.1} and~\ref{t2.3} one can see that
 \beqnn
0\le \varepsilon_i(s,x)
 \le
\alpha (t_i-t_{i-1}) \exp\big\{\|f\|e^{\alpha g^\prime(1)t}\big\},
\qquad 0\le s\le t_i-t_{i-1}.
 \eeqnn
It then follows that
 \beqnn
&&\Big[\bar{U}_{t-t_{i-1}}f(x-t_{i-1})-\bar{U}_{t-t_i}f(x-t_i)\Big] \cr
 &&\qquad
= \int_0^{t_i-t_{i-1}} 1_{\{x-t_{i-1}>s\}} \big[1 - g(\la\eta,\,
e^{-\bar{U}_{t-t_{i-1}-s}f}\ra)\big] ds + o(t_i-t_{i-1}) \cr
 &&\qquad
= \int_{t_{i-1}}^{t_i} 1_{\{x>s\}} \big[1 - g(\la\eta,\,
e^{-\bar{U}_{t-s} f}\ra)\big] ds + o(t_i-t_{i-1}).
 \eeqnn
Substituting this into (\ref{2.7}) and letting $\max_{1\le i\le n}
(t_i-t_{i-1})\to 0$ we obtain (\ref{2.6}). The uniqueness of the solution
of the equation follows from Proposition~2.18 in \cite{[Li11]}. \qed

\btheorem\label{t2.1xx} There is a Borel right transition semigroup
$(Q_t)_{t\ge 0}$ on $N(0,\infty)$ defined by
 \beqlb\label{2.10}
\int_{N(0,\infty)}e^{-\la\nu,f\ra}Q_t(\mu,d\nu)
 =
e^{-\la\mu,U_tf\ra},\qquad f\in B^{+}(0,\infty),
 \eeqlb
where $(t,x)\mapsto U_tf(x)$ is the unique locally bounded positive
solution of
 \beqlb\label{2.11}
U_tf(x) = f(x-t)1_{\{x>t\}} + \az\int_0^t 1_{\{x> t-s\}} [1 -
g(\la\eta,e^{-U_sf}\ra)]ds, \quad t\ge 0, x>0.
 \eeqlb
\etheorem

\proof It is not hard to see that (\ref{2.11}) is a special cases of
(2.21) in \cite[p.39]{[Li11]}. By (\ref{2.2}) we have $\bar{U}_t f(0) =
f(0)$ for all $t\ge 0$. Consequently, if $\{\bar{X}_t: t\ge 0\}$ is a
Markov process with transition semigroup $(\bar{Q}_t)_{t\ge 0}$ defined
by (\ref{2.1}) and (\ref{2.6}), then $\{\bar{X}_t|_{(0,\infty)}: t\ge
0\}$ is a Markov process in $N(0,\infty)$ with transition semigroup
$(Q_t)_{t\ge 0}$ defined by (\ref{2.10}) and (\ref{2.11}). By
Theorem~5.12 of \cite{[Li11]}, we can extend $(Q_t)_{t\ge 0}$ to a Borel
right semigroup on the space of finite measures on $(0,\infty)$. Then
$(Q_t)_{t\ge 0}$ itself is a Borel right semigroup. \qed

By Proposition~\ref{t2.1} we have the following:

\bproposition\label{t2.5} For every $f\in B(0,\infty)$ there is a unique
locally bounded solution $(t,x)\mapsto \pi_tf(x)$ of
 \beqlb\label{2.10aa}
\pi_t f(x) = f((x-t)\vee 0) + \alpha g'(1)\int_0^t 1_{\{x>s\}}\la
\eta,\pi_{t-s} f\ra ds.
 \eeqlb
Moreover, the linear operators $(\pi_t)_{t\ge 0}$ on $B(0,\infty)$ form a
semigroup and
 \beqlb\label{2.11aa}
\int_{N(0,\infty)}\la\nu,f\ra Q_t(\mu,d\nu)
 =
\la\mu,\pi_tf\ra, \qquad t\ge 0, f\in B(0,\infty).
 \eeqlb
\eproposition

\bproposition\label{t2.6} We have $U_tf(x)\le \pi_tf(x)\le \|f\|e^{\alpha
g'(1)t}$ for $t\ge 0, x>0$ and $f\in B(0,\infty)$. \eproposition

A Markov process in $N(0,\infty)$ with transition semigroup $(Q_t)_{t\ge
0}$ defined by (\ref{2.10}) and (\ref{2.11}) will be referred to as a
\textit{branching system} of particles with parameters $(g,\alpha,\eta)$,
where $g$ is the \textit{generating function}, $\alpha$ is the
\textit{branching rate} and $\eta$ is the \textit{offspring position
law}.


\section{The system with infinite branching rate}

\setcounter{equation}{0}

In this section, we consider a system of particles, which can be thought
of as a branching system with infinite branching rate. Let $\rho(x) = x$
for $x\in(0,\infty)$. Let $B_\rho(0,\infty)$ be the set of Borel functions
on $(0,\infty)$ bounded by $\rho\cdot \const$. Let $C_\rho(0,\infty)$ be
the subset of $B_\rho(0,\infty)$ consisting of continuous functions. Let
$M_\rho(0,\infty)$ be the set of Borel measures $\mu$ on $(0,\infty)$
satisfying $\la\mu,\rho\ra< \infty$. Let $N_\rho(0,\infty)$ be the set of
integer-valued measures in $M_\rho(0,\infty)$. We endow $M_\rho(0,\infty)$
and $N_\rho(0,\infty)$ with the topologies defined by the convention that
 \beqlb\label{6.1}
\mu_n\to \mu ~ \mbox{if and only if} ~ \la\mu_n,f\ra\to \la\mu,f\ra ~
\mbox{for all} ~ f\in C_\rho(0,\infty).
 \eeqlb
We say a function $(t,x)\mapsto u_t(x)$ on $[0,\infty)\times (0,\infty)$
is \textit{locally $\rho$-bounded} if
 \beqnn
\sup_{0\le s\le t}\sup_{x\in (0,\infty)} |\rho(x)^{-1}u_s(x)|<
\infty, \qquad t\ge 0.
 \eeqnn

Let $c>0$ be a constant and let $\Pi(dz)$ be a $\sigma$-finite measure on
$(0,\infty)$ such that $\la\Pi,\rho\ra<c$. Given $f\in
B_\rho^+(0,\infty)$, we consider the following evolution equation:
 \beqlb\label{6.2a}
U_tf(x)=f(x-t)1_{\{x>t\}} + c^{-1}\int_0^t 1_{\{x>s\}} \la\Pi,
1-e^{-U_{t-s}f}\ra ds.
 \eeqlb

\blemma\label{t6.1} For each $f\in B_\rho^+(0,\infty)$ there is at most
one locally $\rho$-bounded positive solution of (\ref{6.2a}).
 \elemma

\proof Suppose that $(t,x)\mapsto U_tf(x)$ and $(t,x)\mapsto V_tf(x)$ are
two locally $\rho$-bounded solutions of (\ref{6.2a}). Let
 \beqnn
l_T(x) = \sup_{0\le t\le T} |\rho(x)^{-1}(U_tf(x) - V_tf(x))|.
 \eeqnn
Then for any $0\leq t\leq T$ we have
 \beqnn
|U_tf(x) - V_tf(x)|
 &\le&
c^{-1}\int_0^t1_{\{x>s\}}\la\Pi, |e^{-U_{t-s}f} - e^{-V_{t-s}f}|\ra ds
\cr
 &\le&
c^{-1}\int_0^t1_{\{x>s\}}\la\Pi, |U_{t-s}f - V_{t-s}f|\ra ds \cr
 &\le&
c^{-1}\int_0^t1_{\{x>s\}} ds\|l_T\|\la\Pi,\rho\ra
 \le
c^{-1}\rho(x)\|l_T\|\la\Pi,\rho\ra,
 \eeqnn
which implies $\|l_T\|\le c^{-1}\|l_T\|\la\Pi,\rho\ra$. Then we have
$\|l(T)\|=0$ as $\la\Pi,\rho\ra<c$. \qed

\bproposition\label{t6.2} For each $f\in B_\rho^+(0,\infty)$, there is a
unique locally $\rho$-bounded positive solution $(t,x)\mapsto U_tf(x)$ of
(\ref{6.2a}) and the solution is increasing in $(\Pi, f)\in
M_\rho(0,\infty)\times B_\rho^+(0,\infty)$. Furthermore, the operators
$(U_t)_{t\ge 0}$ on $B_\rho^+(0,\infty)$ form a semigroup and
 \beqlb\label{6.3}
\|\rho^{-1}U_tf\|\le (c - \la\Pi,\rho\ra)^{-1}\|\rho^{-1}f\|, \qquad t\ge
0.
 \eeqlb
\eproposition

\proof \textit{Step 1)} We first assume that $\Pi\in M(0,\infty)$ and
$f\in B^+(0,\infty)$. By Theorem~\ref{Thmpoisson2} there is a unique
locally bounded positive solution $(t,x)\mapsto U_tf(x)$ of (\ref{6.2a}).
This solution can also be constructed by a simple iteration procedure. In
fact, if we let $u_0(t,x) = 0$ and define $u_n(t,x) = u_n(t,x,f)$
inductively by
 \beqlb\label{t6.2a1}
u_n(t,x) &:=& f(x-t)1_{\{x>t\}} + c^{-1}\int_0^t 1_{\{x> s\}}ds
\int_0^{\infty} [1 - e^{-u_{n-1}(t-s,z)}] \Pi(dz),
 \eeqlb
then $u_n(t,x)\to U_tf(x)$ increasingly as $n\to \infty$; see
Proposition~2.18 of \cite{[Li11]}. Using this construction one can see
that the solution of (\ref{6.2a}) is increasing in $ (\Pi, f)\in
M(0,\infty)\times B^+(0,\infty)$.

\medskip

\textit{Step 2)} Next, we assume that $\Pi\in M(0,\infty)$ and $f\in
B^+_{\rho}(0,\infty)$. Let $f_k = f\land k$ for $k\ge 1$. Let
$(t,x)\mapsto U_tf_k(x)$ be the unique locally bounded positive solution
of (\ref{6.2a}) with $f$ replaced by $f_k$. According to the argument
above the sequence $\{U_tf_k\}$ is increasing in $k\ge 1$. By
(\ref{6.2a}) and Proposition~\ref{t2.6} we have
 \beqnn
U_tf_k(x) &\le& \|\rho^{-1}f_k\|\rho(x) + c^{-1}\int_0^t 1_{\{x> s\}}ds
\int_{\mbb R+}U_{t-s}f_k(z)\Pi(dz) \cr
 &\le&
\Big[\|\rho^{-1}f_k\| + c^{-1}\|f_k\|\la\Pi, 1\ra\exp\{c^{-1}\la\Pi,1\ra
t\}\Big] \rho(x).
 \eeqnn
Thus $(t,x)\mapsto U_tf_k(x)$ is locally $\rho$-bounded. On the other
hand, if we set
 \beqnn
l_k(t,x) := \sup_{0\le s\le t}U_sf_k(x),
 \eeqnn
then
 \beqnn
l_k(t,x) &\le& \|\rho^{-1}f_k\|\rho(x) + c^{-1}\rho(x)\sup_{0\le s\le
t}\int_{(0,\infty)}U_{s}f_k(z)\Pi(dz) \cr
 &\le&
\Big[\|\rho^{-1}f\| + c^{-1}\|\rho^{-1}l_k(t)\|\la\Pi, \rho\ra\Big]
\rho(x).
 \eeqnn
It follows that
 \beqnn
\rho(x)^{-1}l_k(t,x) &\le& \|\rho^{-1}f\| + c^{-1}
\|\rho^{-1}l_k(t)\|\la\Pi, \rho\ra,
 \eeqnn
which implies
 \beqlb\label{eqn6.1a}
\|\rho^{-1}l_k(t)\| \le \frac{\|\rho^{-1}f\|}{1 - c^{-1}\la\Pi, \rho\ra}
 =
\frac{c\|\rho^{-1}f\|}{c - \la\Pi, \rho\ra}.
 \eeqlb
In particular, we have
 \beqnn
\|\rho^{-1}U_tf_k\| \le c\|\rho^{-1}f\|(c - \la\Pi,\rho\ra)^{-1}, \qquad
t\ge 0.
 \eeqnn
Then the limit $U_tf(x) := \lim_{k\to \infty} U_tf_k(x)$ exists. It is
easy to see that $(t,x)\mapsto U_tf(x)$ is a locally $\rho$-bounded
positive solution of (\ref{6.2a}) satisfying (\ref{6.3}).

\medskip

\textit{Step 3)} In the general case, let $\Pi_k(dz) = 1_{\{z\ge
1/k\}}\Pi(dz)$ for $k\ge 1$. For $f\in B^+(0,\infty)$ let $(t,x)\mapsto
U_t^{(k)}f(x)$ be the unique locally $\rho$-bounded positive solution of
(\ref{6.2a}) with $\Pi$ replaced by $\Pi_k$. By the second step, we can
define $U_t^{(k)}f$ by the equation for any $f\in B^+_{\rho}(0,\infty)$.
The sequence $\{U_t^{(k)}f\}$ is increasing by the first and the second
steps. As in the second step one can see the limit $U_tf(x) := \lim_{k\to
\infty} U_t^{(k)}f(x)$ exists and is a locally $\rho$-bounded positive
solution of (\ref{6.2a}) satisfying (\ref{6.3}). The uniqueness of the
solution follows from Lemma~\ref{t6.1}, which yields the semigroup
property of $(U_t)_{t\geq0}$. \qed

\bproposition\label{t6.3} For each $f\in B_\rho(0,\infty)$, there is a
unique locally $\rho$-bounded solution $(t,x)\mapsto \pi_tf(x)$ of
 \beqlb\label{6.5}
\pi_tf(x)=f(x-t)1_{\{x>t\}} + c^{-1}\int_0^t 1_{\{x>t-s\}}
\la\Pi,\pi_sf\ra ds.
 \eeqlb
Furthermore, the solution is increasing in $(\Pi,f)\in
M_\rho(0,\infty)\times B_\rho(0,\infty)$ and $(\pi_t)_{t\ge 0}$ is a
semigroup of linear operators on $B_\rho(0,\infty)$ such that
 \beqlb\label{6.6}
\|\rho^{-1}\pi_tf\|\le (c - \la\Pi,\rho\ra)^{-1}\|\rho^{-1}f\|, \qquad
t\ge 0.
 \eeqlb
\eproposition

\proof For $f\in B_\rho^+(0,\infty)$ one can obtain (\ref{6.5}) by
differentiating both sides of (\ref{6.2a}), and (\ref{6.6}) follows by
(\ref{6.3}). By the linearity, the equation has a solution for any $f\in
B_{\rho}(0,\infty)$ and (\ref{6.6}) remains true. By
Proposition~\ref{t6.2} one can see the solution is increasing in
$(\Pi,f)\in M_\rho(0,\infty)\times B_\rho(0,\infty)$. The uniqueness of
the solution follows by a modification of the proof of Lemma~\ref{t6.1}.
\qed

\btheorem\label{tborel} There is a Borel right semigroup $(Q_t)_{t\geq0}$
on $N_{\rho}(0,\infty)$ defined by
 \beqlb\label{6.9.1}
\int_{N_\rho(0,\infty)} e^{-\la\nu,f\ra} Q_t(\mu,
d\nu)=e^{-\la\mu,U_tf\ra}, \quad f\in B^+_{\rho}(0,\infty),
 \eeqlb
where $(t,x)\mapsto U_tf(x)$ is the unique locally $\rho$-bounded
positive solution of (\ref{6.2a}). Furthermore, we have
 \beqlb\label{6.10.1}
\int_{N_\rho(0,\infty)} \la\nu,f\ra Q_t(\mu, d\nu) = \la\mu,\pi_tf\ra,
\qquad f\in B_\rho(0,\infty),
 \eeqlb
where $(t,x)\mapsto \pi_tf(x)$ is the unique locally $\rho$-bounded
solution of
 \beqlb\label{6.5.1}
\pi_tf(x)=f(x-t)1_{\{x>t\}} + c^{-1}\int_0^t 1_{\{x>t-s\}}
\la\Pi,\pi_sf\ra ds.
 \eeqlb
\etheorem

\proof Let $(U_t^{(k)})_{t\geq 0}$ be defined as in the last step of the
proof of Proposition~\ref{t6.2}. By Theorem~\ref{t2.1}, we can define a
Borel right semigroup $(Q_t^{(k)})_{t\geq 0}$ on $N(0,\infty)$ by
 \beqlb\label{3.11xx}
\int_{N(0,\infty)} e^{-\la\nu,f\ra} Q_t^{(k)}(\mu, d\nu)
 =
e^{-\la\mu,U_t^{(k)}f\ra}, \qquad f\in B^+(0,\infty).
 \eeqlb
In view of (\ref{2.11aa}) and (\ref{6.6}), if $\mu\in N_\rho(0,\infty)$
is a finite measure, we can regard $Q_t^{(k)}(\mu, \cdot)$ as a
probability measure on $N_\rho(0,\infty)$. Clearly, $N_\rho(0,\infty)$ is
a closed subset of $M_\rho(0,\infty)$ and the latter is an isomorphism of
$M(0,\infty)$ under the mapping $\nu(dx)\mapsto x\nu(dx)$. By
Theorem~1.20 of \cite{[Li11]} and the last step of the proof of
Proposition~\ref{t6.2} one can see (\ref{6.9.1}) really defines a
probability measure $Q_t(\mu, \cdot)$ on $N_\rho(0,\infty)$ for any
finite measure $\mu\in N_\rho(0,\infty)$. By approximating $\mu\in
N_\rho(0,\infty)$ with an increasing sequence of finite measures, we
infer the formula defines a probability kernel on $N_\rho(0,\infty)$.
Here (\ref{6.2a}) can be regarded as a special form of (6.11) in
\cite{[Li11]}. By Theorem 6.3 in \cite{[Li11]}, we can extend
$(Q_t)_{t\geq0}$ to a Borel right semigroup on $M_{\rho}(0,\infty)$. Then
we infer that $(Q_t)_{t\geq0}$ itself is a Borel right semigroup. The
moment formula (\ref{6.10.1}) can be obtained as in the proof of
Proposition~\ref{t2.1}. \qed

A Markov process in $N_\rho(0,\infty)$ with transition semigroup
$(Q_t)_{t\ge 0}$ defined by (\ref{6.2a}) and (\ref{6.9.1}) will be
referred to as a \textit{single-birth branching system} of particles with
\textit{offspring position law} $\Pi$. Clearly, when $\Pi$ is a finite
measure on $(0,\infty)$, this reduces to a special case of the model
introduced in the last section.


\section{Subordinators with negative drift}

\setcounter{equation}{0}

In this section, we give a description of the branching structures in
subordinators with negative drift.   Set
$$
 C^1(\mbb R)=\{f\in C(\mbb R): f \text{ is differentiable and has bounded derivative.}\}
$$
Let $c>0$ be a constant and let $\Pi$
be a $\sigma$-finite measure on $(0,\infty)$ satisfying $\la\Pi,\rho\ra<
c$. Suppose that $\{S_t: t\ge 0\}$ is a subordinator with negative drift
generated by the operator $A$ given by
 \beqlb\label{6.7.1}
Af(x) = \int^{\infty}_0 [f(x+z)-f(x)]\Pi(dz) - cf'(x), \qquad f\in
C^1(\mbb R).
 \eeqlb
We assume $S_0=a>0$. Our assumption implies that $S_t\to -\infty$ as
$t\to \infty$, so the hitting time
 \beqnn
\tau^-_0 := \inf\{t> 0: S_t\leq 0\}
 \eeqnn
is a.s. finite. For $t\geq0$ set
 \beqlb\label{4.2aa}
J(t):=\{u\in[0, \tau^-_0]: S_{u-}\leq t< S_{u}\}
 \eeqlb
with the convention that $S_{0-}=0$. Then we define the measure-valued
process
 \beqlb\label{XII}
X_t = \sum_{u\in J(t)}\dz_{S_{u}-t}, \quad t\ge 0.
 \eeqlb
It is easy to see that $X_0=\dz_a$.

\btheorem\label{Thmpoisson2} The process $\{X_t: t\ge 0\}$ is a
single-birth branching system in $N_\rho(0,\infty)$ with transition
semigroup $(Q_t)_{t\ge 0}$ defined by (\ref{6.2a}) and (\ref{6.9.1}).
\etheorem

\proof \textit{Step 1)} We first assume $\Pi(dz)$ is a finite measure on
$(0,\infty)$. In this case we clearly have $\mbf{P}\{\#J(t)<\infty$ for
all $t\ge 0\} = 1$. Let
 \beqnn
C(t) = \{u\in [0,\tau_0^-]: S_u=S_{u-}=t\} \quad\mbox{and}\quad
\zeta(t) = \#C(t).
 \eeqnn
We can write $C(t) = \{\tau_1(t),\cdots, \tau_{\zeta(t)}(t)\}$ by ranking
the elements in  increasing  order. Let $\tau_0(t) = 0$ and
 \beqnn
\sz_{i}(t) = \inf\{u\ge \tau_{i-1}(t): S_u>t\}, \quad i=1,2, \cdots,
\zeta(t).
 \eeqnn
Then it is easy to see that $J(t) = \{\sigma_1(t),\cdots,
\sigma_{\zeta(t)}(t)\}$ and
 \beqlb\label{Xcom1}
X_t = \sum_{i=1}^{\zeta(t)}\dz_{S_{\sz_i(t)}-t}, \quad t\ge 0.
 \eeqlb
In particular, we have $\zeta(t) = \#J(t)$. Write $M_t = \min_{0\le r\le
t} S_r$ and $L_t = S_t-M_t$. Set $\eta_0 = 0$ and for $k\ge 1$ define
inductively
 \beqnn
\zeta_k = \inf\{t>\eta_{k-1}: S_t\neq M_t\}
 \quad\mbox{and}\quad
\eta_k = \inf\{t>\zeta_k: S_t=M_t\}.
 \eeqnn

\includegraphics[width=1.0\textwidth, height=0.3\textheight]{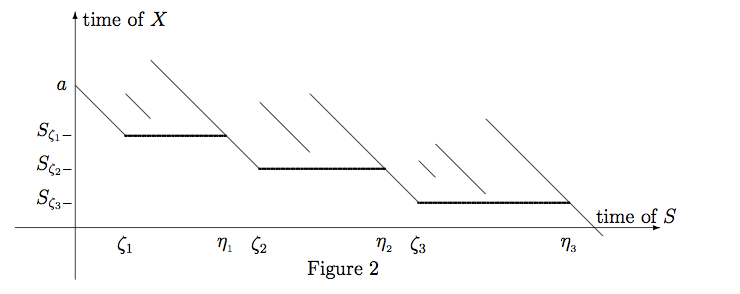}

\noindent It is clear that $a-S_{\zeta_1-}$ is an exponentially
distributed random variable with parameter $\lambda/c$, where $\lambda =
\Pi(0,\infty)$. By the memoryless property one can see $S_{\eta_{k-1}} -
S_{\zeta_k-} = S_{\zeta_{k-1}-} - S_{\zeta_k-}$ is also exponentially
distributed with parameter $\lambda/c$ for each $k\ge 1$. Let $e_k(t) =
(L_{\zeta_k+t} - L_{\zeta_k-})1_{\{t< \eta_k-\zeta_k\}}$ and let $F(dw)$
denote the distribution of $\{e_1(t): t\ge 0\}$ on $D^+[0,\infty)$, the
space of positive c\`{a}dl\`{a}g functions on $[0,\infty)$ equipped with
the Skorokhod topology. Then
 \beqlb\label{3.11aa}
(S_{\eta_{k-1}}-S_{\zeta_k-},\{e_k(t): t\ge 0\}), \quad k=1,2,\cdots
 \eeqlb
are i.i.d. random variables in $(0,\infty)\times D^+[0,\infty)$ with
 \beqlb\label{3.9aa}
\mbf{P}(S_{\eta_{k-1}}-S_{\zeta_k-}\in dy,e_k\in dw)
 =
\frac{\lambda}{c}e^{-\lambda y/c}dyF(dw), \quad y>0, w\in D^+[0,\infty).
 \eeqlb
It follows that
 \beqlb\label{3.12aa}
(S_{\zeta_k-},\{e_k(t): t\ge 0\}), \quad k=1,2,\cdots
 \eeqlb
are positioned in $(-\infty,a)\times D^+[0,\infty)$ as the atoms of a
Poisson random measure with intensity $c^{-1}\lambda dyF(dw)$. Let $n =
\max\{k\ge 0: \eta_k\le \tau_0^-< \zeta_{k+1}\}$. Then
$S_{\zeta_n-}{\overset{(d)}{=}}a\wedge \Theta$, where $\Theta$ is
exponentially distributed with parameter $\lz/c$. It is easy to see that
 \beqnn
X_t = \left\{\begin{array}{ll}
 \delta_{a-t} \quad &\mbox{for $0\le t< S_{\zeta_n-}$,} \cr
 \delta_{a-S_{\zeta_n-}} + \delta_{S_{\zeta_n}-S_{\zeta_n-}} \quad
 &\mbox{for $t = S_{\zeta_n-}$.}
\end{array}\right.
 \eeqnn
Therefore, the first offspring in the particle system is born at time
$S_{\zeta_n-}$. By (\ref{3.9aa}) we have
 \beqnn
\mbf{P}(S_{\zeta_n}-S_{\zeta_n-}\in dz)
 =
\mbf{P}(e_n(0)\in dz)
 =
\Pi(dz), \quad  z>0.
 \eeqnn
By the i.i.d. property of the random variables in (\ref{3.11aa}) we infer
that $\{X_t: t\ge 0\}$ is a branching system with parameters
$(g,c^{-1}\lambda,\lambda^{-1}\Pi)$, where $g(z)\equiv z$. In other
words, the system have transition semigroup defined by (\ref{6.2a}) and
(\ref{6.9.1}).

\textit{Step 2)} In the general case, let us consider an approximation of
the subordinator with drift. Let $\{N(ds,dz)\}$ be a Poisson random
measure on $(0,\infty)^2$ with intensity $ds\Pi(dz)$. Then a realization
of $\{S_t: t\ge 0\}$ is constructed by
 \beqnn
S_t := a+\int_0^t\int_0^\infty z N(ds,dz) - c t.
 \eeqnn
For each $k\ge 1$ we can define another subordinator with drift $\{S_t^{(k)}:
t\ge 0\}$ by
 \beqnn
S_t^{(k)} := a+ \int_0^t\int_{1/k}^{\infty} z N(ds,dz) - c t.
 \eeqnn
Then $S_t^{(k)}\leq S_t$ and as $k\to \infty$ we have
 \beqlb\label{6.11}
\sup_{0\le t\le T} (S_t^{(k)}-S_t) = S_T^{(k)}-S_T\to 0, \qquad T\ge 0.
 \eeqlb
Let $\Pi_k(dz) := 1_{\{ z \geq 1/k\}}\Pi(dz)$. Let $\{X_t^{(k)}: t\ge
0\}$ be the measure-valued process defined by (\ref{XII}) with $\{S_t:
t\ge 0\}$ replaced by $\{S_t^{(k)}: t\ge 0\}$. Then the first step
implies that $\{X_t^{(k)}: t\ge 0\}$ is a branching system in
$N(0,\infty)$ with transition semigroup $(Q_t^{(k)})_{t\ge 0}$ given by
(\ref{3.11xx}), where $(U_t^{(k)})_{t\geq 0}$ is defined as in the last
step of the proof of Proposition~\ref{t6.2}. Then we can also think of
$\{X_t^{(k)}: t\ge 0\}$ as a Markov process in $N_\rho(0,\infty)$. For
$t>t_n\ge t_{n-1}\ge \cdots\ge t_1\ge 0$ and $\{f,f_n,\cdots,
f_1\}\subset C_{\rho}^+(0,\infty)$, we have
 \beqlb\label{6.12}
&&\mbf{E}\exp\Big\{-\sum_{i=1}^n\la X_{t_i}^{(k)}, f_i\ra - \la
X_t^{(k)}, f\ra\Big\} \cr
 &&\qquad
=\,\mbf{E}\exp\Big\{-\sum_{i=1}^n\la X_{t_i}^{(k)}, f_i\ra - \la
X_{t_n}^{(k)}, U^{(k)}_{t-t_n}f\ra\Big\} \cr
 &&\qquad
=\,\mbf{E}\exp\Big\{-\sum_{i=1}^n\la X_{t_i}^{(k)}, f_i\ra - \la
X_{t_n}^{(k)}, U_{t-t_n}f\ra\Big\} + \varepsilon_k(f)
 \eeqlb
with
 \beqnn
|\varepsilon_k(f)|\le \mbf{E}\big|\exp\big\{-\la X_{t_n}^{(k)},
U_{t-t_n}f\ra\big\} - \exp\big\{- \la X_{t_n}^{(k)},
U^{(k)}_{t-t_n}f\ra\big\}\big|.
 \eeqnn
Let $(t,x)\mapsto \pi_tf(x)$ be the unique locally $\rho$-bounded
solution of (\ref{6.5}) and let $(t,x)\mapsto \pi_t^{(k)}f(x)$ be the
unique locally $\rho$-bounded solution of the equation with $\gamma$
replaced by $\gamma_k$. By Proposition~\ref{t6.3} and
Theorem~\ref{tborel},
 \beqnn
\varepsilon_k(f)
 &\le&
\mbf{E}\big\la X_{t_n}^{(k)},\big|U_{t-t_n}f -
U^{(k)}_{t-t_n}f\big|\big\ra \cr
 &=&
\pi^{(k)}_{t_n}\big|U_{t-t_n}f - U^{(k)}_{t-t_n}f\big|(a) \cr
 &\le&
\pi_{t_n}\big|U_{t-t_n}f - U^{(k)}_{t-t_n}f\big|(a).
 \eeqnn
By the proof of Proposition~\ref{t6.2} we have $U_tf(x) = \lim_{k\to
\infty}U_t^{(k)}f(x)$ increasingly. Then $\varepsilon_k(f)\to 0$ as $k\to
\infty$. From (\ref{6.12}) we get
 \beqnn
&&\mbf{E}\exp\Big\{-\sum_{i=1}^n\la X_{t_i}, f_i\ra - \la X_t, f\ra\Big\}
\cr
 &&\qquad
=\,\mbf{E}\exp\Big\{-\sum_{i=1}^n\la X_{t_i}, f_i\ra - \la X_{t_n},
U_{t-t_n}f\ra\Big\}.
 \eeqnn
The above equality can be extended to $\{f,f_n,\cdots, f_1\}\subset
B_\rho^+(0,\infty)$ by a monotone class argument. Then $\{X_t: t\ge 0\}$
is a Markov process in $N_\rho(0,\infty)$ with transition semigroup
$(Q_t)_{t\ge 0}$ given by (\ref{6.2a}) and (\ref{6.9.1}). \qed


\section{Negative subordinators with positive drift}

\setcounter{equation}{0}

In this section, we give a characterization of the branching structures
in negative subordinators with positive drift.
 We shall derive the result
from the one in the last section by a time reversal approach. Suppose
that $\Pi$ is a $\sigma$-finite measure on $(0,\infty)$ with
$\int_0^{\infty}1\wedge z\Pi(dz)< \infty$. Let $\{S_t^*: t\ge 0\}$ be a
L\'evy process generated by $A^*$ such that
 \beqlb\label{6.7}
A^*f(x) = \int^{\infty}_0 [f(x-z)-f(x)]\Pi(dz) + cf'(x), \qquad f\in C^1(\mbb R).
 \eeqlb
Assume $S_0^*=0$ and $0<c<\la\Pi,\rho\ra\leq \infty$. Then $S^*$ has
Laplace exponent
 $$
\psi(\beta)=c\beta-\int^{\infty}_{0}(1-e^{-\beta z})\Pi(dz), \qquad \beta\geq 0
 $$
Namely, ${\bf E}{e^{\beta S_t^*}} = e^{t\psi(\beta)}$. For $q\ge0$ let
$\Phi(q)=\sup\{t\geq 0: \psi(t)=q\}$. Define
$$
\tau^-_0:=\inf\{t> 0: S_t^*\leq 0\}.
$$
We have $\mbf{P}(0<\tau^-_0<\infty)=1$; see Corollary 5 in Section VII.1
of \cite{B96}. Then for $t\geq0$, set
$$
J(t):=\{u\in(0, T_0]: S_{u}^*\leq t< S_{u-}^*\}
$$
and define
 \beqlb\label{XI}
 X_t^*:=\sum_{u\in J(t)}\dz_{S_{u-}^*-t}.
 \eeqlb
with $X_0^*=\dz_{S^*_{\tau^-_0-}}$. Note that $\# J(t)<\infty, a.s.$

\btheorem\label{tborelc} There is a Borel right semigroup
$(Q_t)_{t\geq0}$ on $N_{\rho}(0,\infty)$ defined by
 \beqlb\label{6.9}
\int_{N_\rho(0,\infty)} e^{-\la\nu,f\ra} Q_t(\mu,d\nu)
 =
e^{-\la\mu,U_tf\ra}, \quad f\in B^+_{\rho}(0,\infty),
 \eeqlb
where $(t,x)\mapsto U_tf(x)$ is the unique locally $\rho$-bounded
positive solution of
 \beqlb\label{6.2}
U_tf(x)=f(x-t)1_{\{x>t\}} + c^{-1}\int_0^t 1_{\{x>s\}}ds \int^{\infty}_0[
1-e^{-U_{t-s}f(z)}]\Pi^+(dz),
 \eeqlb
where $\Pi^+(dz)=e^{-\Phi(0)z}\Pi(dz)$. Furthermore, we have
 \beqlb\label{6.10}
\int_{N_\rho(0,\infty)} \la\nu,f\ra Q_t(\mu, d\nu) = \la\mu,\pi_tf\ra,
\qquad f\in B_\rho(0,\infty),
 \eeqlb
where $(t,x)\mapsto \pi_tf(x)$ is the unique locally $\rho$-bounded
solution of (\ref{6.5}). \etheorem

\proof Note that $\beta_0 := \Phi(0)$ is the largest solution of
$\psi(\beta) = 0$. It follows that
 \beqnn
c - \int_0^\infty ze^{-z\Phi(0)}\Pi(dz) = \psi'(\Phi(0))> 0.
 \eeqnn
Then $(c,\Pi^+)$ satisfies the conditions of Theorem~\ref{tborel}. \qed

For reader's convenience, we first present a result on the distribution of
time reversed L\'evy processes which should be well-known to experts. For
$a>0$, let $\{S^{\#}_t: t\geq0\}$ and $\{S^{-}_t: t\geq0\}$ be two
subordinators with drift starting at $a>0$ with ${\bf E} e^{-\beta
S^{\#}_t} = e^{t\psi(\beta+\Phi(0))-a\beta}$ and ${\bf E}e^{-\beta
S^{-}_t} = e^{t\psi(\beta)-a\beta}$, respectively. Define
$T^{\#}(0)=\inf\{t\geq0: S^{\#}_t\leq0\}$ and $T^{-}(0)=\inf\{t\geq0:
S^{-}_t\leq0\}$. Note that $\mbf{P}\{T^{\#}(0)<\infty\}=1$.

\begin{lemma}\label{LLevy}
Given $S_{\tau^-_0-}^*=a$, the time reversed process
$\{S_{(\tau_0^--t)-}^*, 0\leq t<\tau_0^-\}$ has the same distribution as
$\{S^{\#}_t, 0\leq t< T^{\#}(0)\}.$
\end{lemma}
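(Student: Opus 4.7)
The lemma is a time-reversal identity for the spectrally negative L\'evy process $S^*$ at its first passage below zero. My plan combines three ingredients: the compensation formula for the Poisson random measure of jumps, the Bertoin duality for L\'evy paths on a \emph{deterministic} interval, and an Esscher transform at the root $\Phi(0)$ of $\psi$.

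First, since $S^*$ is spectrally negative, the level $0$ is crossed by a single downward jump, and so the compensation formula for the jump measure disintegrates the pre-$\tau_0^-$ path and the terminal value as
\begin{align*}
&\mathbf{E}\bigl[H(\{S^*_s\}_{s<\tau_0^-})\, g(S^*_{\tau_0^--})\bigr] \\
&\qquad = \int_0^\infty \!\! dt \int_0^\infty \!\! \Pi(dz)\, \mathbf{E}\bigl[H(\{S^*_s\}_{s<t})\, g(S^*_{t-})\, 1_{\{\tau_0^->t\}}\, 1_{\{S^*_{t-}\le z\}}\bigr],
\end{align*}
for bounded measurable $H$ and $g$. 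This reduces the claim to path identities on deterministic intervals $[0,t)$, reweighted by $\Pi[S^*_{t-},\infty)$. On each such interval I would invoke Bertoin's duality $(S^*_s)_{0\le s\le t}\stackrel{d}{=}(S^*_t-S^*_{(t-s)-})_{0\le s\le t}$ (\cite{B96}, Lemma~II.2), which rewrites the reversed path $\{S^*_{(t-s)-}\}$ as a forward L\'evy path starting at $S^*_t$ and ending at $0$; the event $\{\tau_0^->t\}$ translates into the dual event that the terminal value is a strict running maximum. Next, the martingale $\{e^{\Phi(0)S^*_t}\}$ (a martingale since $\psi(\Phi(0))=0$) yields an Esscher change of measure $d\tilde{\mathbf{P}}/d\mathbf{P}|_{\mathcal{F}_t}=e^{\Phi(0)S^*_t}$ under which $S^*$ is a L\'evy process with Laplace exponent $\psi(\cdot+\Phi(0))$ and L\'evy measure $\Pi^+(dz)=e^{-\Phi(0)z}\Pi(dz)$, i.e., with the same law as $\{S^\#_t-a\}$. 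Inserting this density into the dual representation and integrating out the crossing jump $z$ over $[a,\infty)$ should identify the reversed path, conditionally on $S^*_{\tau_0^--}=a$, with $\{S^\#_t\}_{0\le t<T^\#(0)}$.

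The delicate step is the final bookkeeping: one must correctly combine the compensation weight $\Pi[a,\infty)$, the Esscher density $e^{\Phi(0)S^*_t}$ along and at the endpoint of the path, and the strict-maximum event from duality, so that together they yield exactly the first-passage-below-zero killing of $S^\#$. The exponential factor $e^{-\Phi(0)z}$ entering $\Pi^+$ has to emerge from the Esscher weight evaluated at the terminal $S^*_t$ after the duality map; verifying that the induced killing is at the \emph{first} (rather than some later) passage of $0$ by the dual process is the main technical obstacle.
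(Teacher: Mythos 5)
Your three ingredients -- compensation formula at the crossing jump, Bertoin's duality on a deterministic interval, and the Esscher transform at $\Phi(0)$ -- do lead to a proof, and this is a genuinely different (more self-contained) route than the paper's, which simply invokes the fluctuation package of Bertoin, Chapter VII: Lemma 21 and Theorem 17 to condition on $S^*_{\tau_0^--}=a$ by removing the jumps across the running infimum, Theorem 18 for the path reversal, and Lemma 7 to identify the conditioned law of $S^-$ with that of $S^\#$. However, as written your argument stops exactly where the real work lies: what you call ``final bookkeeping'' is a missing idea, not bookkeeping. After the compensation formula and the duality map you are left with an expression of the form $\int_0^\infty dt\,\mathbf{E}\big[G\big((S^*_t-S^*_u)_{u<t}\big)\,g(S^*_t)\,\Pi([S^*_t,\infty))\,1_{\{S^*_s<S^*_t\ \forall s<t\}}\big]$, and you must convert this Lebesgue integral over times at which the terminal value is a strict running maximum into the law of a first-passage-killed process started from a level $a$. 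The key fact you never supply is that $S^*$, being spectrally negative of bounded variation with drift $c$, creeps upward: its running maximum increases at rate $c$ precisely on the ladder set, so $dt$ restricted to strict-maximum times corresponds to $c^{-1}dx$ over levels, with $t=T^+_x:=\inf\{s:S^*_s>x\}$. This change of variables does two jobs at once: it turns the dual path into $(x-S^*_u)_{u<T^+_x}$, i.e.\ a copy of $S^-$ started at $x$ and killed exactly at its \emph{first} passage below $0$ (which settles the obstacle you flag), and it produces the weight $c^{-1}\Pi([x,\infty))\,1_{\{T^-(0)<\infty\}}$.

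The Esscher step then finishes cleanly, but not quite in the way you describe. Apply the tilt to the killed dual path: since $S^-$ has downward drift and only upward jumps it hits $0$ continuously, so the Radon--Nikodym factor at $T^-(0)$ is the constant $e^{\Phi(0)x}$, whence $\mathbf{E}_x[G;\,T^-(0)<\infty]=e^{-\Phi(0)x}\,\mathbf{E}[G(S^\#)]$ with $S^\#_0=x$. Combining gives the joint formula $\int_0^\infty c^{-1}e^{-\Phi(0)x}\Pi([x,\infty))\,g(x)\,\mathbf{E}[G((S^\#_u)_{u<T^\#(0)})]\,dx$, which yields both the marginal density of $S^*_{\tau_0^--}$ (the paper's (6.9a)) and the asserted conditional law. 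In particular, your worry that ``the factor $e^{-\Phi(0)z}$ entering $\Pi^+$ has to emerge from the Esscher weight on the crossing jump'' is misplaced: the crossing jump contributes only the untilted tail $\Pi([x,\infty))$ to the law of $S^*_{\tau_0^--}$, while $\Pi^+$ is merely the jump measure of $S^\#$ and comes automatically from tilting the path law. Finally, correct the sign slip: under $d\tilde{\mathbf{P}}/d\mathbf{P}|_{\mathcal{F}_t}=e^{\Phi(0)S^*_t}$ the process $S^*$ remains spectrally negative with exponent $\psi(\cdot+\Phi(0))$, so it has the law of $a-S^\#$, not of $S^\#-a$; the reflection is what the duality map accounts for, and it should be stated that way.
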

\proof Define $I_t=\inf\{0\wedge S_s^{*}: 0\leq s\leq t\}$ and
 $$
 J_t=\sum_{s\leq t}1_{\{S_s^*<I_{s-}\}}(S_s^*-S_{s-}^*).
 $$
For $a>0$, set $\varsigma(a)=\sup\{t\geq0: S_t^*-J_t \leq x\}$. By Lemma
21 and Theorem 17 in Chapter VII of \cite{B96}, conditioned on
$S_{\tau^-_0-}^*=a$,
 $$
\{S_t^*: 0\leq t< \tau_0^-\}\overset{(d)}{=}\{S_t^*-J_t: 0\leq t<
\varsigma(a)\}.
 $$
Then by Theorem 18 and Lemma 7 in Chapter VII of \cite{B96}, under
$\mbf{P}\{\cdot|S_{\tau^-_0-}^*=a\}$, $\{S_{(\tau_0^--t)-}^*: 0\leq
t<\tau_0^-\}$ has the same law as $\{S^{-}_t: 0\leq t< T^{-}(0)\}$ under
$\mbf{P}\{\cdot|T^{-}(0)<\infty\}$ which is the same as the law of
$\{S^{\#}_t: 0\leq t< T^{\#}(0)\}$. We have completed the proof. \qed

\btheorem\label{t6.4} The measure-valued process $\{X_t^*:t\geq0\}$
defined by (\ref{XI}) is a single-birth branching system with transition
semigroup $(Q_t)_{t\ge 0}$ determined by (\ref{6.9}) and
 \beqlb\label{6.9a}
\mbf{P}\{S(X_0^*)\in da\}
 =
c^{-1}{ e^{-\Phi(0)a}}\Pi([a,\infty))da\quad\text{for}\quad a>0,
 \eeqlb
where $S(X_0^*)=S^*_{\tau^-_0-}$ denotes the support for $X_0^*$.
 \etheorem

\proof (\ref{6.9a}) follows from Theorem 17 in Section VII of \cite{B96}.
 With the convention $S_{0-}^{\#} =
0$ we let
 \beqnn
J^{\#}(t) := \{u\in [0,T^{\#}(0)]: S_{t-}^{\#}\le u< S_t^{\#}\}.
 \eeqnn
For each $t\ge 0$ define the random measure $X_t^{\#}$ on $(0, \infty)$ by
 \beqnn
X_t^{\#} = \sum_{u\in J^{\#}(t)}\dz_{S_u^{\#}-t}.
 \eeqnn
Then by Theorem \ref{Thmpoisson2}, ${X^{\#}}$ is a Markov process with
transition semigroup $(Q_t)_{t\ge 0}$ given by (\ref{6.9}).

On the other hand, given $S_{\tau^-_0-}^*=a$, by Lemma \ref{LLevy}, we
have
 $$
\{S_{(\tau_0^--t)-}^*: 0\leq t<\tau_0^-\}
\overset{(d)}{=}\{S^{\#}_t: 0\leq t< T^{\#}(0)\}.
 $$
Thus given $S_{\tau^-_0-}^*=a$
 $$
\{X_t^*: 0\leq t<\infty \}\overset{(d)}{=}\{X_t^{\#}: 0\leq t<\infty\}.
 $$
We have completed the proof. \qed


\section{Properties of the branching systems}\label{Secproperty}

\setcounter{equation}{0}

In this section we discuss the properties of the measure-valued processes
via the exit problems for L\'evy processes. For a L\'evy process $S$ and
any $x\geq 0$ let
 \beqlb\label{Dtau1}
\tau^+_x=\inf\{t> 0: S_t>x\},\quad \tau^-_x=\inf\{t> 0: S_t\leq x\}
 \eeqlb
with the convention $\inf\emptyset=\infty$. Set $\mbf{P}_x\{\cdot\} =
\mbf{P}\{\cdot|S_0=x\}.$

\subsection{Properties of $X$}

In this subsection we discuss the properties of the measure-valued
process $X$ in Theorem~\ref{Thmpoisson2}, which is determined by process
$S$ which satisfies that $S_0=a$ and $S_t+ct$ is a subordinator with
L\'evy measure $\Pi$ and $\int_0^{\infty}z\Pi(dz)<c$. Recall
$\psi(\lambda)=c\lambda-\int_0^{\infty}(1-e^{-\lambda z})\Pi(dz)$ and
 $\Phi(q)=\sup\{t\geq 0: \psi(t)=q\}$ for $q\geq 0.$ Let $W$ denote
the scale function of $S$, i.e., an increasing and continuous function on $[0,
\infty)$ taking values in $[0, \infty)$ with
 $$
 \int_0^{\infty}e^{-\lambda x}W(x)dx=\frac{1}{\psi(\lambda)},
 $$
and we make the convention that $W(x)=0$ for $x<0$. We will need the
following solution to the two-sided exit problems.

\begin{lemma}\label{two-sided-exit}
For any $t\geq x, y\geq 0$ and $z>0 $,
 \[
{\mbf E}_x e^{-q\tau^-_0}=e^{-x\Phi(q)},
 \qquad
{\mbf P}_x\{\tau_0^-<\tau^+_t\}=\frac{W(t-x)}{W(t)}
 \]
and
 \beqnn
&&{\mbf P}_x\Big\{S_{\tau^+_t-}\in dy, S_{\tau^+_t}-t\in dz,
\tau^+_t<\tau_0^- \Big\} \\
 &&\qquad
= \Big(\frac{W(t-x)W(y)}{W(t)}-W(y-x)\Big)dy\Pi(t-y+dz).
 \eeqnn
\end{lemma}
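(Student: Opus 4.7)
The plan is to reduce all three identities to classical fluctuation identities for spectrally negative L\'evy processes. Under $\mbf P_x$, define $\hat S_s := x - S_s$ for $s\ge 0$. Then $\hat S_0=0$, $\hat S$ has only negative jumps (since $S$ has only positive jumps), and a direct computation gives $\mbf E[e^{\beta\hat S_s}] = e^{s\psi(\beta)}$; consequently $\hat S$ is a spectrally negative L\'evy process with Laplace exponent $\psi$, scale function $W$ and right inverse $\Phi$. Under this sign change, $\tau_0^-$ of $S$ becomes the first passage time $\hat\tau_x^+$ of $\hat S$ above level $x$, and $\tau_t^+$ of $S$ becomes the first passage time $\hat\tau_{x-t}^-$ of $\hat S$ strictly below $x-t$. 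The standing hypothesis $t\ge x\ge 0$ ensures $x-t\le 0\le x$, so $\hat S$ starts inside the interval $(x-t,x)$ and the associated two-sided exit problem is well-posed.

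For the first identity, I would apply optional stopping to the exponential martingale $M_s := e^{-\Phi(q)S_s - qs}$, $s\ge 0$, at the a.s.\ finite stopping time $\tau_0^-$. Since $S$ is spectrally positive it enters $(-\infty,0]$ continuously, so $S_{\tau_0^-}=0$ on $\{\tau_0^-<\infty\}$, and the martingale identity $\mbf E_x M_{\tau_0^-}=M_0 = e^{-\Phi(q)x}$ collapses to $\mbf E_x e^{-q\tau_0^-}=e^{-\Phi(q)x}$.

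For the second identity, $\mbf P_x\{\tau_0^-<\tau_t^+\}$ equals the probability that $\hat S$, starting from $0$, exits $(x-t,x)$ through the upper boundary $x$ before the lower boundary $x-t$. The classical two-sided exit formula for spectrally negative L\'evy processes (Chapter~VII of \cite{B96}) yields this probability as $W(0-(x-t))/W(x-(x-t)) = W(t-x)/W(t)$, as desired.

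For the joint law, I would translate via $\hat S$: $S_{\tau_t^+-} = x - \hat S_{\hat\tau^-_{x-t}-}$ and the overshoot $S_{\tau_t^+}-t = (x-t) - \hat S_{\hat\tau^-_{x-t}}$, while $\{\tau_t^+<\tau_0^-\}$ corresponds to the event that the first downward passage of $\hat S$ below $x-t$ precedes its upward hit of $x$. The triple law at first passage for a spectrally negative L\'evy process, conditioned on occurring before hitting a prescribed upper level, is a classical identity (see Chapter~VII of \cite{B96}) whose density factors as a scale-function expression times the L\'evy measure of the downward jumps of $\hat S$; this downward L\'evy measure is exactly $\Pi$. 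The asserted formula then follows by changing variables back to the $(y,z)$ coordinates of $S$. The main obstacle I foresee is purely bookkeeping: keeping careful track of how the pre-passage position, the overshoot, and the two exit levels transform under the sign change $\hat S = x - S$, and matching the resulting density with the precise form of the triple law as stated in \cite{B96}; once this correspondence is pinned down, the remaining computation is mechanical.
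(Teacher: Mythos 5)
Your proposal is correct and follows essentially the same route as the paper, which simply quotes the classical fluctuation identities for spectrally one-sided L\'evy processes (Kyprianou, Ch.~8, and equivalently Bertoin, Ch.~VII); your sign-flip $\hat S = x-S$ plus the two-sided exit and undershoot/overshoot laws is exactly the standard source of those cited formulas, and your optional-stopping argument with $e^{-\Phi(q)S_s-qs}$ (valid since $S$ creeps downward, so $S_{\tau_0^-}=0$, and the stopped martingale is bounded) is the usual proof of the first identity that the paper cites. No gaps; the remaining work is indeed only the change-of-variables bookkeeping you describe.
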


\proof The first identity is from the beginning of page 212 of [13]. The
second identity follows by (8.8) of \cite{[Ky]} with $q=0$. The third
identity is (8.29) of \cite{[Ky]}. \qed

We first present a representation of $X_t$ for any fixed $t>0$.

\begin{Prop}\label{ProBrII1}
The random measure $X_t$ has the same distribution as
$\sum_{i=0}^{N-1} \delta_{Y_i}$, where $N$ and $(Y_i)$ are
independent random variables.
\begin{itemize}

\item For $a>t$,
 \begin{equation}\label{representA}
\mbf{P}\{N=n\}=\frac{1}{cW(t)}\left(1-\frac{1}{cW(t)}\right)^{n-1},
\quad n\geq 1,
 \end{equation}
$Y_0=a-t$ and $Y_i, i=1,2,...$ are i.i.d. random variables with
common distribution
 \begin{equation}\label{density}
\frac{1}{cW(t)}\int_0^t W(y)\Pi(t-y+dz)dy, \quad z>0.
 \end{equation}

\item For $ a\leq t$, $\mbf{P}\{N=0\}=W(t-a)/W(t)$ and
 \begin{equation}\label{representA1}
\mbf{P}\{N=n\}=\frac{1}{cW(t)}\left(1-\frac{W(t-a)}{W(t)}\right)
\left(1-\frac{1}{cW(t)}\right)^{n-1},
\quad n\geq 1,
 \end{equation}
$(Y_i)_{i\geq 1}$ are i.i.d. random variables with common
distribution (\ref{density}) and $Y_0$ is an independent random
variable with distribution
 \[
\int_0^t\left(\frac{W(t-a)W(y)}{W(t)}-W(y-a)\right)\Pi(t-y+dz)dy, \quad z>0.
 \]

\end{itemize}
\end{Prop}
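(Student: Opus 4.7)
The plan is to decompose $X_t$ into contributions from successive upcrossings of $S$ over the level $t$ and to apply the strong Markov property at each continuous return of $S$ to $t$. The key structural fact is that $S$ has negative continuous drift and only upward jumps, so every upcrossing of level $t$ occurs as a jump from a value $\le t$ to a value $> t$, while every crossing from above to below $t$ is continuous and lands exactly at $t$. Consequently, between consecutive elements of $J(t)$ the process must first drift down continuously to $t$ before jumping up again, and the upcrossings after any visit to $t$ fall into i.i.d.\ cycles initiated from level~$t$.

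For the case $a > t$, the convention $S_{0-} = 0$ places $0 \in J(t)$, yielding the deterministic first particle $Y_0 = a - t$. Setting $T := \inf\{s > 0 : S_s \le t\}$, the structural fact gives $S_T = t$ almost surely; by the strong Markov property at $T$, the remainder of $S$ is an independent copy starting from $t$. Each subsequent \emph{cycle} between consecutive visits to level $t$ either produces another element of $J(t)$ or ends at $\tau^-_0$, and by the second identity of Lemma~\ref{two-sided-exit} each cycle is successful with probability $1 - W(0)/W(t) = 1 - 1/(cW(t))$, using the bounded-variation identity $W(0) = 1/c$. Iterating the strong Markov property at each successive continuous hitting of $t$, the cycles are i.i.d., and $N - 1$ is geometric with parameter $p = 1/(cW(t))$, which gives (\ref{representA}).

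The common distribution of $Y_1, Y_2, \ldots$ comes from the third identity of Lemma~\ref{two-sided-exit} at $x = t$: since $W(y - t) = 0$ for $y \in (0, t)$ and $W(0) = 1/c$, the joint sub-probability measure $\mbf{P}_t\{S_{\tau^+_t -} \in dy,\, S_{\tau^+_t} - t \in dz,\, \tau^+_t < \tau^-_0\}$ simplifies to $\frac{W(y)}{cW(t)}\, dy\, \Pi(t - y + dz)$, and marginalizing over $y \in (0, t)$ yields exactly (\ref{density}). Its total mass $1 - 1/(cW(t))$ equals the per-cycle success probability, so after normalization it provides the conditional law of each $Y_i$, independently across $i$, consistent with factorization of the joint distribution of $(N, Y_1, \ldots, Y_{N-1})$ through $N$ and $(Y_i)_{i \ge 1}$.

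For the case $a \le t$, the first cycle is special, starting from $a$ rather than from $t$. The second identity of Lemma~\ref{two-sided-exit} gives $\mbf{P}\{N = 0\} = W(t-a)/W(t)$, and integrating the third identity (at $x = a$) over $y \in (0, t)$ yields the stated distribution for $Y_0$; the renewal identity $cW(x) = 1 + \int_0^x W(x-u)\overline{\Pi}(u)\,du$ for the scale function confirms that this integrates to $1 - W(t-a)/W(t)$. After the first upcrossing, $S$ descends continuously to $t$ and the argument of the previous case applies, producing the geometric tail $(1 - 1/(cW(t)))^{n-1}/(cW(t))$ in (\ref{representA1}). The main obstacle is the careful bookkeeping of the strong Markov property at each successive continuous hitting time of $t$, together with verification of $W(0) = 1/c$ and the renewal equation for $W$ that supply all the normalizing constants; the remaining calculations are routine applications of Lemma~\ref{two-sided-exit}.
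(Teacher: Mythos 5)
Your proof is correct and takes essentially the same route as the paper: decompose the excursion into i.i.d.\ cycles at level $t$ via the strong Markov property at the successive continuous hittings of $t$, and read off the geometric law and the overshoot law from Lemma~\ref{two-sided-exit} together with $W(0)=1/c$. Your explicit check of the normalizing constants through the renewal equation for $W$ is a detail the paper leaves implicit, but it is not a different argument.
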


\begin{proof}
Observe that by the construction, the total mass $X_t(0,\infty)$ is
exactly the total number of excursions above level $t$, which is the same
as the number of continuous downcrossings of level $t$. In addition, each
excursion of $S$ started with a jump upcrossing level $t$ has to come
back to level $t$ due to overall negative drift and lack of negative
jumps. Then (\ref{representA}) and (\ref{representA1}) follow easily from
the strong Markov property and Lemma \ref{two-sided-exit}.

For $t<a$, given $N=n\geq 1$, the excursion of $S$ above $0$ contains $n$
excursions at level $t$. The first excursion starts from $a$ and all the
excursions end at $a$. Further, by the strong Markov property the second to
the $n$th excursion starts with i.i.d. initial value $t+Y_1, \ldots,
t+Y_{n-1}$, respectively. By the construction the support of $X_t$ is
$\{a-t,Y_1,\ldots,Y_{n-1}\}$. Note that $Y_1$ is overshoot of the first
upward jump across level $t$. Then by Lemma \ref{two-sided-exit}
\begin{equation*}
\begin{split}
{\mbf P}_a \{Y_1\in dz\}
&={\mbf P}_t\{S_{\tau^+_t}\in t+dz, \tau^+_t<\tau_0^-\}\\
&=\frac{W(0)}{W(t)}\int_0^t W(y)\Pi(t-y+dz)dy.
\end{split}
\end{equation*}
The desired result follows. The corresponding result for $t\geq a$ follows
similarly.
\end{proof}

Our next result is on the weighted occupation time for $X$.

\begin{Prop}\label{occupation}
For any $f\in B_\rho^+(0,\infty)$ and $h\in B_\rho^+(0,\infty)$, we have
 \beqlb\label{occupation1}
{\mbf E} e^{-\int_0^\infty h(t)\lan X_t, f\ran dt}={\bf E} e^{-\lan
X_0,\omega_0\ran},
 \eeqlb
where $\omega$ is the unique nonnegative solution of the integral
equation
 \beqlb\label{eqn4.3} &&\omega_t(x)-
c^{-1}\int_t^\infty 1_{\{x>s-t\}}ds \int^{\infty}_0\Pi(dz)[
1-e^{-\omega_s(z)}]\cr&&\qquad\qquad=\int_t^\infty
h(s)f(x-s+t)1_{\{x>s-t\}}ds.
 \eeqlb
\end{Prop}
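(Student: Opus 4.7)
The formula (\ref{occupation1}) is a Feynman--Kac identity for the measure-valued process $X$: $\omega_0(x)$ plays the role of the cumulant of the time-weighted functional $\int_0^\infty h(s)\la X_s, f\ra\,ds$ under $\mbf{P}_{\dz_x}$. My plan is to derive it by iterating the Markov property of $X$ over a fine partition of time, applying the Laplace functional representation (\ref{6.9.1}) at each step, and then identifying the continuum limit with (\ref{eqn4.3}) via the cumulant semigroup equation (\ref{6.2a}).

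I would first work on a finite horizon by replacing $h$ with $h_T := h\cdot 1_{[0,T]}$. Partition $[0,T]$ into $0 = t_0 < t_1 < \cdots < t_n = T$, set $\delta_i := t_{i+1}-t_i$, and define the backward iterates $g_n := 0$ and
\[
g_i := U_{\delta_i}\bigl(g_{i+1} + \delta_i\,h(t_{i+1})\,f\bigr), \qquad i = n-1, \ldots, 0.
\]
Iterating the Markov property of $X$ at the points $t_i$ and applying (\ref{6.9.1}) at each step yields
\[
\mbf{E}_\mu \exp\Bigl(-\sum_{i=0}^{n-1}\delta_i\,h(t_{i+1})\,\la X_{t_{i+1}}, f\ra\Bigr) = e^{-\la\mu, g_0\ra}.
\]
As $\max_i\delta_i \to 0$, the left-hand side tends to $\mbf{E}_\mu\exp(-\int_0^T h(s)\la X_s, f\ra\,ds)$ by right-continuity of $s\mapsto\la X_s, f\ra$ and dominated convergence. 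For the right-hand side, expanding (\ref{6.2a}) to first order in $\delta_i$ gives, for $x>\delta_i$,
\[
g_i(x) = g_{i+1}(x-\delta_i) + \delta_i\,h(t_{i+1})\,f(x-\delta_i) + c^{-1}\delta_i\int_0^\infty [1-e^{-g_{i+1}(z)}]\Pi(dz) + o(\delta_i),
\]
which is precisely the Euler scheme along the characteristic $s\mapsto(t+s, x-s)$ for the backward transport PDE $\partial_t G - \partial_x G = -hf - c^{-1}\la\Pi, 1-e^{-G}\ra$ with boundary data $G(\cdot,0+) \equiv 0$. Integrating this PDE along characteristics reproduces exactly (\ref{eqn4.3}) with $h_T$ in place of $h$; hence $g_0 \to \omega^T_0$ and the identity holds on the horizon~$T$.

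Finally, I would let $T\uparrow\infty$. Since $\la\Pi,\rho\ra<c$, the subordinator of Section~4 drifts to $-\infty$, so $X_t \equiv 0$ for all $t$ past the a.s.\ finite supremum of the excursion of $S$ above $0$; therefore $\int_0^T h(s)\la X_s,f\ra\,ds$ stabilises almost surely to $\int_0^\infty h(s)\la X_s,f\ra\,ds$. Monotone Picard iteration from $0$ in (\ref{eqn4.3}) with $h_T \uparrow h$ forces $\omega^T_0 \uparrow \omega_0$, so (\ref{occupation1}) follows by monotone convergence on both sides; existence, uniqueness and local $\rho$-boundedness of $\omega^T$ (and then of $\omega$) are established by the same monotone iteration together with a contraction argument analogous to Lemma~\ref{t6.1} and Proposition~\ref{t6.2}. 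The main technical obstacle is the continuum-limit step: controlling the iterates $g_i$ uniformly in the partition and verifying that the Euler discretisation converges to the integral equation, for which one uses the $\rho$-weighted bound (\ref{6.3}) from Proposition~\ref{t6.2}.
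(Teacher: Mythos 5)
Your proposal is correct and takes essentially the same route as the paper: discretize the weighted occupation time by Riemann sums, use the Markov property together with the Laplace functional (\ref{6.9.1}) inductively over the partition to get an exact formula for the discretized functional, and pass to the limit to identify the cumulant with the solution of (\ref{eqn4.3}) --- precisely the argument of Le Gall's Section II.3 (Corollary~9) that the paper invokes. The only cosmetic difference is that the paper passes to the limit inside the exact integral equation satisfied by the finite-dimensional cumulant, whose inhomogeneous term is $\sum_i f_i(x-t_i+t)1_{\{x>t_i-t\}}$, which sidesteps the Euler-scheme error control you single out as the main technical obstacle.
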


\begin{proof}
By Theorem \ref{t6.4}, similar to Section II.3 of Le Gall \cite{[Le99]}
we can show by induction together with the Markov property that for any
$0\leq t_1<\ldots<t_p$ and any $f_1,\ldots,f_p\in B_\rho^+(0,\infty)$,
\[{\mbf E} e^{-\sum_{i=1}^p \lan X_{t_i},f_i\ran}=e^{-\lan X_0,\omega_0\ran}\]
where $(\omega_t(x), t\geq 0, x\in (0,\infty))$ is the unique nonnegative
solution of the integral equation
 \[
\omega_t(x)- c^{-1}\int_t^\infty 1_{\{x>s-t\}}ds
\int^{\infty}_0\Pi(dz)[ 1-e^{-\omega_s(z)}]
=\sum_{i=1}^pf_i(x-t_i+t)1_{\{x>t_i-t\}}.
 \]
Further, by taking a limit on the Riemann sums we can show that
(\ref{occupation}) holds. Since the arguments for (\ref{eqn4.3}) is
similar to (\ref{6.2}), one could follow the proof of Corollary 9 in
Section II.3 of \cite{[Le99]} to get (\ref{occupation1}). We omit the
details here. \qed
\end{proof}

It is easy to recover Laplace transform for the total occupation time
$\int_0^\infty \lan X_t, 1\ran dt$. Observe that it is equal to the sum of
$a$ and sizes of all the jumps of $S$ up to time $\tau^-_0$, which is in
turn equal to $c\tau^-_0$. We then have
 \[
{\bf E}_a e^{-q\int_0^\infty \lan X_t, 1\ran dt}
={\bf E}_a e^{-qc\tau^-_0}=e^{-a\Phi(qc)}.
 \]

\subsection{Properties of $X^*$}

Properties of the measure-valued process $X^*$ in Theorem \ref{t6.4} can
also be investigated via the exit problems for process $S^*$ with
generator (\ref{6.7}), the negative of a subordinator with positive drift.

Throughout this subsection, for $q\geq0$, let $W^{(q)}$ be the scale
function for the spectrally negative L\'evy process $S^*$; i.e.;
$W^{(q)}(x)=0$ for $x<0$ and on $[0, \infty)$, it is an increasing and
continuous function taking values in $[0,\infty)$ with
 $$
 \int_0^{\infty}e^{-\lambda x}W^{(q)}(x)dx=\frac{1}{\psi(\lambda)-q},
 $$
for $\lambda>\Phi(q):=\sup\{\lambda\geq 0: \psi(\lambda)=q\}$, where
$\psi(\lambda)=c\lambda-\int_0^{\infty}(1-e^{-\lambda z})\Pi(dz)$. Write
$W$ for $W^{(0)}$. We also first present a result on the two-sided exit
problems of $S^*$; see Chapter 8 of \cite{[Ky]} and \cite{Be97}.

\begin{lemma}\label{two-sided-exitB}
For any $y>0>z$,
\[ {\mbf E} e^{-q\tau^-_0}=1-\frac{q}{\Phi(q)}W^{(q)}(0)\]
and
 \[
{\mbf P}\{S^*_{\tau^-_0-}\in dy, S^*_{\tau^-_0}\in dz\}
= W(0)e^{-\Phi(0)y}\Pi(dz-y)dy.
 \]
\end{lemma}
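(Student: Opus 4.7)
The plan is to derive both identities as direct specializations at $x=0$ of the classical fluctuation identities for the spectrally negative L\'evy process $S^*$, as developed in Chapter 8 of \cite{[Ky]} and Chapter VII of \cite{B96}. Note that under our assumptions $\psi'(0+) = c - \la\Pi,\rho\ra < 0$, so $\Phi(0) > 0$ and $\tau_0^-$ is a.s.\ finite.

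For the first identity I would invoke the standard Laplace transform of the first passage time below zero: for every $x\ge 0$,
\[
\mbf{E}_x e^{-q\tau_0^-} \,=\, Z^{(q)}(x) \,-\, \frac{q}{\Phi(q)}\,W^{(q)}(x), \qquad Z^{(q)}(x) \,=\, 1 + q\int_0^x W^{(q)}(y)\,dy.
\]
Since $S_0^* = 0$, the claim follows by setting $x = 0$ and using $Z^{(q)}(0) = 1$. This is a direct consequence of the Wiener--Hopf factorization and the definition of the $q$-scale function $W^{(q)}$.

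For the second identity the plan is to use the quintuple law at first passage of Doney--Kyprianou (Theorem 7.7 of \cite{[Ky]}), which gives the joint law of the tuple $(\bar S^*_{\tau_0^--},\, \bar S^*_{\tau_0^--}-S^*_{\tau_0^--},\, S^*_{\tau_0^--}-S^*_{\tau_0^-},\, \tau_0^-)$ explicitly in terms of the scale function $W$ and the jump law encoded by $\Pi$. Starting from $x=0$, integrating out the running supremum $\bar S^*_{\tau_0^--}$ and the undershoot of that supremum collapses the formula, via the asymptotics $W(s)\sim\Phi'(0)^{-1}e^{\Phi(0)s}$ as $s\to\infty$, into the marginal joint law of the undershoot $S^*_{\tau_0^--} = y$ and the overshoot $S^*_{\tau_0^-} = z$. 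The exponential factor $e^{-\Phi(0)y}$ then arises from this integration, the coefficient $W(0) = 1/c$ encodes the rate at which the process creeps upward from the origin (so that the first excursion above $0$ starts immediately), and $\Pi(dz-y)$ records the law of the downward jump whose magnitude equals $y-z > 0$.

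The main obstacle I anticipate is purely notational: keeping the signs and offsets straight when converting from the law of the downward jump size (which lives on $(0,\infty)$ and is specified by $\Pi$) to the joint law of the pre- and post-jump positions $(y,z)$ with $y>0>z$, so that $\Pi(dz-y)$ is interpreted as $\Pi$ evaluated on the set $y-z \in \cdot\,$. Once this bookkeeping is fixed, both formulas are immediate $x=0$ specializations of the cited theorems and no further calculation is required.
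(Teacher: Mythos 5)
Your handling of the first identity is correct and coincides with the paper's: both amount to quoting the standard first-passage formula $\mbf{E}_x e^{-q\tau^-_0}=Z^{(q)}(x)-\frac{q}{\Phi(q)}W^{(q)}(x)$ (Theorem 8.1(ii), equation (8.6), of \cite{[Ky]}) and setting $x=0$, where $Z^{(q)}(0)=1$; your remark that $\psi'(0+)<0$ makes $\tau^-_0$ a.s.\ finite is the right justification for dropping the indicator $1_{\{\tau^-_0<\infty\}}$.

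For the second identity the paper again just cites a ready-made result, namely the displayed equation immediately after (8.29) in \cite{[Ky]}: $\mbf{P}_x\{S^*_{\tau^-_0-}\in dy,\,S^*_{\tau^-_0}\in dz\}=\big[e^{-\Phi(0)y}W(x)-W(x-y)\big]\Pi(dz-y)\,dy$, specialized to $x=0$ (so $W(x-y)=W(-y)=0$ and $W(0)=1/c$). Your alternative plan via the quintuple law is a legitimate route in principle, but as sketched it has genuine problems. First, Theorem 7.7 of \cite{[Ky]} concerns first passage over a level $x>0$ by a process started at $0$; here the crossing is below the starting level itself, a degenerate case in which the correct conditioning variables are the pre-passage \emph{infimum} of $S^*$ and its epoch (both identically $0$ here, since the drift is positive), not the running supremum $\bar S^*_{\tau^-_0-}$ that you propose to integrate out; making this rigorous requires either proving the identity for a general starting point $x>0$ and then passing to $x=0$, or arguing directly with the compensation formula and the killed potential measure $\int_0^\infty \mbf{P}\{S^*_t\in dy,\,t<\tau^-_0\}\,dt=W(0)e^{-\Phi(0)y}\,dy$. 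Second, the mechanism you describe conflates two derivations: in the quintuple-law route the factor $e^{-\Phi(0)y}$ comes from the potential measure of the ascending ladder height of $S^*$ (a unit drift killed at rate $\Phi(0)$, since $S^*\to-\infty$), and no asymptotics of $W$ enter at all; the asymptotics enter only if you instead start from the two-sided exit identity (the analogue of (8.29) with an upper barrier $t$) and let $t\to\infty$, using $W(t-y)/W(t)\to e^{-\Phi(0)y}$ --- and in that case note your constant is inverted, since $W(s)\sim e^{\Phi(0)s}/\psi'(\Phi(0))=\Phi'(0)e^{\Phi(0)s}$, although the constant cancels in the ratio. With either repair the argument closes, and your reading of $\Pi(dz-y)$ as the law of the jump magnitude $y-z$ is the correct bookkeeping.
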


\begin{proof}
The first identity is just (8.6) of \cite{[Ky]}. The second identity the
equation right after (8.29) in \cite{[Ky]}. \qed
\end{proof}

For any $y> t$, let
\[g(y):={c^{-1}} e^{-\Phi(0)(y-t)}\int_0^t \Pi(y-dz)\frac{W(z)}{W(t)}\]
and
\[h(y):={c^{-1}}e^{-\Phi(0)(y-t)}\left\{\int_0^t
\Pi(y-dz)\left(1-\frac{W(z)}{W(t)}\right)+\Pi((y,\infty))\right\}.
\]
One will see from the proof of Proposition \ref{representB} that
$\int_t^\infty g(y)dy+\int_t^\infty h(y)dy=1$.

 Fix
$t>0$ until the end of the following Proposition \ref{representB}. We first
proceed to recover distribution for the total mass for $X_t^*$. The proof of
the following representation result is similar to Proposition
\ref{representA} and is omitted.

\begin{Prop}\label{representB}
The random measure $X_t^*$ has the same distribution as $\sum_{i=0}^{N-1}
\delta_{Y_i}$, where $N$ and $(Y_i)$ are independent random variables.
\begin{itemize}

\item
\begin{equation}\label{total_mass_1}
{\mbf P}\{N=0\}=1-\frac{1}{cW(t)}
\end{equation}
and for any $n\geq 1$
 \begin{equation}\label{total_mass_2}
{\mbf P}\{N=n\}=\frac{1}{cW(t)}\left(\int_t^\infty g(y)dy\right)^{n-1}
\int_t^\infty h(y)dy.
 \end{equation}

\item $Y_0$ has the density function $ h(t+y)/\int_t^\infty h(r)dr, y>
    0$ and $Y_i, i=1,2,\ldots$, share the common density function $
    g(t+y)/\int_t^\infty g(r)dr, y> 0$.

\end{itemize}
\end{Prop}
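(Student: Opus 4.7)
The plan is to reduce Proposition~\ref{representB} to Proposition~\ref{ProBrII1} via the time reversal in Lemma~\ref{LLevy}, and then to average over the random starting point given by (\ref{6.9a}). By Lemma~\ref{LLevy} and the coupling used in the proof of Theorem~\ref{t6.4}, conditionally on $S^*_{\tau_0^--}=a$ the process $\{X_t^*:t\ge 0\}$ has the same law as the process $\{X_t^\#:t\ge 0\}$ associated with the subordinator with negative drift $S^\#$ starting at $a$, of drift $-c$ and L\'evy measure $\Pi^+(dz)=e^{-\Phi(0)z}\Pi(dz)$. Its scale function $W^+$ satisfies $W^+(x)=e^{-\Phi(0)x}W(x)$, as one reads off from $\int_0^\infty e^{-\lambda x}W^+(x)\,dx = 1/\psi(\lambda+\Phi(0))$ together with the characterization of $W$.

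Applying Proposition~\ref{ProBrII1} to $X_t^\#$ with $(\Pi^+,W^+)$ in place of $(\Pi,W)$ then yields the conditional law of $X_t^*$ given $a$: a sum $\sum_{i=0}^{N-1}\delta_{Y_i}$ whose component $N$ is geometric on $\{1,2,\dots\}$ with parameter $1/(cW^+(t))$ when $a>t$, and is a mixture of $\{N=0\}$ with weight $W^+(t-a)/W^+(t)$ and the same geometric when $a\le t$; here $Y_0=a-t$ in the first regime, $Y_0$ has the Proposition~\ref{ProBrII1} density (built from $W^+$ and $\Pi^+$) in the second, and the $Y_i$ with $i\ge 1$ share the common overshoot density from (\ref{density}) written with $(\Pi^+,W^+)$. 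One then integrates over $a$ against the density $c^{-1}e^{-\Phi(0)a}\Pi([a,\infty))\,da$ from (\ref{6.9a}); the factor $e^{-\Phi(0)a}$ combines with the $e^{-\Phi(0)\,\cdot}$ factors built into $W^+$ and $\Pi^+$ to produce exactly the prefactor $e^{-\Phi(0)(y-t)}$ that appears in the definitions of $g$ and $h$.

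To convert the conditional geometric parameter $1/(cW^+(t))=e^{\Phi(0)t}/(cW(t))$ into the stated $1/(cW(t))$, and to verify both (\ref{total_mass_1}) and the closure relation $\int_t^\infty g(y)\,dy + \int_t^\infty h(y)\,dy = 1$, one invokes the classical renewal identity
\[
cW(t)-1 \;=\; \int_0^t W(t-s)\,\Pi([s,\infty))\,ds,
\]
which is immediate from the Laplace-transform characterization $\int_0^\infty e^{-\lambda x}W(x)\,dx = 1/\psi(\lambda)$. The main obstacle is the bookkeeping in this last step: the $a\le t$ piece of the mixture must be handled with care, since the Proposition~\ref{ProBrII1} density for $Y_0$ in that case produces a $W(y-a)$ contribution whose integral against $c^{-1}e^{-\Phi(0)a}\overline{\Pi}(a)\,da$ reproduces the second summand inside the braces in $h(y)$, whereas the $a>t$ piece contributes the $\Pi((y,\infty))$ term once the point mass $\delta_{a-t}$ is integrated against the density in (\ref{6.9a}). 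Once these identifications are in hand, matching the remaining terms in $g$ and $h$ is a direct substitution.
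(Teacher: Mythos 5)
Your route is correct, but it is genuinely different from the paper's. The paper proves Proposition \ref{representB} directly on the forward process $S^*$: it identifies the total mass $X_t^*(0,\infty)$ with the number of up-crossings of level $t$ before $\tau^-_0$, applies the strong Markov property at the successive (continuous) up-crossing times, and reads everything off from the two-sided exit identities of Lemmas \ref{two-sided-exit} and \ref{two-sided-exitB}; the functions $g$ and $h$ then appear immediately as the (defective) laws of the pre-down-crossing overshoot for a generic excursion at level $t$ and for the last one, and no time reversal, no use of (\ref{6.9a}), and no mixing over a random starting point are needed. You instead condition on $S^*_{\tau_0^--}=a$, use Lemma \ref{LLevy}/Theorem \ref{t6.4} to replace $X^*$ by the system $X^\#$ driven by the subordinator with drift $-c$ and L\'evy measure $\Pi^+$, apply Proposition \ref{ProBrII1} with $(\Pi^+,W^+)$, $W^+(x)=e^{-\Phi(0)x}W(x)$, and mix over $a$ against (\ref{6.9a}). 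This does close: the structural point that makes the mixture retain the product form of the statement --- and which you do record --- is that on $\{N\ge 1\}$ the conditional law of $N$ given $a$ is the \emph{same} geometric with parameter $1/(cW^+(t))$ in both regimes $a>t$ and $a\le t$, so independence of $N$ and $(Y_i)$ survives the averaging; the renewal identity $\int_0^t W(t-s)\Pi([s,\infty))\,ds=cW(t)-1$ then gives ${\mbf P}\{N\ge 1\}=1/(cW(t))$, while $\int_t^\infty g(y)dy=1-1/(cW^+(t))$ and $\int_t^\infty h(y)dy=1/(cW^+(t))$ convert the geometric parameter into (\ref{total_mass_2}). Your approach buys economy (it recycles Proposition \ref{ProBrII1} and Theorem \ref{t6.4}, with only the Esscher relation $W^+=e^{-\Phi(0)\cdot}W$ and the renewal identity as new inputs); the paper's buys directness, since the factorized structure is immediate from the strong Markov property without any conditioning on $a$.

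Two details to repair when writing this out. First, your attribution of terms is internally inconsistent: the summand $\Pi((y,\infty))$ in the braces of $h$ comes \emph{only} from the $a>t$ piece (the point mass $\delta_{a-t}$ integrated against (\ref{6.9a})), as you also state; the $-W^+(y-a)$ term of the $a\le t$ piece does not reproduce it. In the notation of Proposition \ref{ProBrII1}, integrating $-W^+(y-a)$ against $c^{-1}e^{-\Phi(0)a}\overline{\Pi}(a)\,da$ yields (after stripping the common exponential prefactor) $1-cW(y)$, while the term $W^+(t-a)W^+(y)/W^+(t)$ yields $cW(y)-W(y)/W(t)$; their sum $1-W(y)/W(t)$, carried against $\Pi(t-y+dz)$ and integrated in $y\in(0,t)$, is exactly the \emph{first} summand $\int_0^t\Pi(\cdot-dz)\bigl(1-W(z)/W(t)\bigr)$ in $h$. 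Second, mind normalizations: the ``distribution'' (\ref{density}) and the $Y_0$-law in Proposition \ref{ProBrII1} for $a\le t$ are defective, with total masses $1-1/(cW(t))$ and $1-W(t-a)/W(t)$ respectively, so it is cleanest to mix the unnormalized joint measures and normalize only at the end; this is also where $\int_t^\infty g(y)dy+\int_t^\infty h(y)dy=1$ falls out.
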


\proof Since $N=0$ if and only if the whole excursion of $S^*$ stays
below level $t$ up to time $\tau^-_0$, the probability
(\ref{total_mass_1}) just follows from Lemma \ref{two-sided-exit}.
Observe that the total mass $X^*_t(0,\infty)$ is exactly the number of
up-crossings (the same as the number of down-crossings) of level $t$ by
process $S^*$ until the time $\tau^-_0$. Each up-and-down-crossing of
level $t$ corresponds to an excursion starting at level $t$. All of such
excursions end at level $t$ except that the last one ends below $0$ at
time $\tau^-_0$, where the last excursion determines the residual life
time of a particle that can be either the ancestor or an offspring. Using
solutions to the two-sided exit problem in Lemmas \ref{two-sided-exit}
and \ref{two-sided-exitB} together with the strong Markov property
repeatedly at those up-crossing times of level $t$ we have
 \beqlb
&&{\mbf P}\{X_t^*(0,\infty)=n\}\cr &&={\mbf
P}\{\tau^+_t<\tau^-_0\}\left(\int_0^t{\mbf P}_t\{S^*_{\tau^-_t}\in
dz\}{\mbf P}_z\{\tau^+_t<\tau^-_0\}\right)^{n-1} \cr &&\qquad\quad
\times\left(\int_0^t{\mbf P}_t\{S^*_{\tau^-_t}\in dz\}{\mbf
P}_z\{\tau^+_t>\tau^-_0\}+ {\mbf P}_t\{S^*_{\tau^-_0}\leq 0\}\right)\cr
&&=\frac{W(0)}{W(t)}\left(\int_t^\infty e^{-\Phi(0)(y-t)}W(0)dy\int_0^t
\Pi(y-dz)\frac{W(z)}{W(t)}\right)^{n-1} \cr &&\qquad\times\int_t^\infty
e^{-\Phi(0)(y-t)}W(0)dy\left\{\int_0^t
\Pi(y-dz)\left(1-\frac{W(z)}{W(t)}\right)+\Pi((y,\infty)) \right\}.
 \eeqlb
Therefore, the probability (\ref{total_mass_2}) follows.

Given $X^*_t(0,\infty)=n $, the support of $X^*_t(0,\infty)$ consists of
those distances between the pre-down-crossing (of level $t$) values of
$S^*$ and $t$ for the $n$ excursions from $t$. By the strong Markov
property all these distances are independent. By Lemma
\ref{two-sided-exitB} the distances for the first $n-1$ excursions
following the same distribution of
 \begin{equation*}
 \begin{split}
&\int_0^t{\mbf P}_t\{S^*_{\tau^-_t-}\in t+dy,
S^*_{\tau^-_t}\in dz\}{\mbf P}_z\{\tau^+_t<\tau^-_0\}\left( \int_0^t{\mbf P}_t\{
S^*_{\tau^-_t}\in dz\}{\mbf P}_z\{\tau^+_t<\tau^-_0\}\right)^{-1}\\
&=e^{-\Phi(0)y}W(0)dy\int_0^t \Pi(t+y-dz)\frac{W(z)}{W(t)}
\left(\int_t^\infty g(r)dr\right)^{-1}\\
&=g(t+y)dy\left(\int_t^\infty g(r)dr\right)^{-1}.
 \end{split}
 \end{equation*}
The distance for the last excursion follows the distribution of
 \beqnn
&&\left(\int_0^t{\mbf P}_t\{S^*_{\tau^-_t-}\in t+dy, S^*_{\tau^-_t}\in
dz\}{\mbf P}_z\{\tau^+_t>\tau^-_0\}+ {\mbf P}_t\{S^*_{\tau^-_t-}\in t+dy,
S^*_{\tau^-_t}\leq 0\}\right)\\
&&\times\left(\int_t^\infty \int_0^t{\mbf P}_t\{S^*_{\tau^-_t-}\in t+dy,
S^*_{\tau^-_t}\in
dz\}{\mbf P}_z\{\tau^+_t>\tau^-_0\}+ {\mbf P}_t\{S^*_{\tau^-_t-}\in t+dy,
S^*_{\tau^-_t}\leq 0\}\right)^{-1}\\
&&=h(t+y)dy\left(\int_t^\infty h(r)dr\right)^{-1}.
 \eeqnn \qed


Our next result is on the weighted occupation time for $X^*$. The
proof is similar to Proposition \ref{occupation} and is omitted.

\begin{Prop}
For any $f\in B_\rho^+(0,\infty)$ and $h\in B_\rho^+(0,\infty)$, we have
 \beqlb\label{occupation2}{\mbf E} e^{-\int_0^\infty h(t)\lan X^*_t, f\ran
dt}={\bf E} e^{-\lan X^*_0,\omega_0\ran},
 \eeqlb
where $\omega$ is the unique nonnegative solution of the integral
equation
 \beqnn
&&\omega_t(x)- c^{-1}\int_t^\infty 1_{\{x>s-t\}}ds
\int^{\infty}_0\Pi^+(dz)[
1-e^{-\omega_s(z)}]\cr&&\qquad\qquad=\int_t^\infty
h(s)f(x-s+t)1_{\{x>s-t\}}ds.
 \eeqnn
\end{Prop}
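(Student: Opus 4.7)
The plan is to mimic the proof of Proposition~\ref{occupation} step by step, replacing $\Pi$ with $\Pi^+$ and $X$ with $X^*$, and relying on Theorem~\ref{t6.4} for the transition semigroup of $X^*$. The overall strategy has three stages: first establish a finite-dimensional Laplace functional identity, then pass from Riemann sums to integrals, then identify the limit as the solution of the stated integral equation.

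First, I would prove by induction on $p\ge 1$ that for any $0\le t_1<\cdots<t_p$ and $f_1,\ldots,f_p\in B_\rho^+(0,\infty)$,
\[
{\mbf E}e^{-\sum_{i=1}^p \lan X^*_{t_i},f_i\ran}=e^{-\lan X^*_0,\omega^{(p)}_0\ran},
\]
where $(t,x)\mapsto \omega^{(p)}_t(x)$ is the unique locally $\rho$-bounded positive solution of
\[
\omega_t(x)-c^{-1}\int_t^\infty 1_{\{x>s-t\}}ds\int_0^\infty \Pi^+(dz)[1-e^{-\omega_s(z)}]=\sum_{i=1}^p f_i(x-t_i+t)1_{\{x>t_i-t\}}.
\]
The base case $p=1$ is exactly the Laplace functional identity (\ref{6.9}) combined with (\ref{6.2}), after the time shift $s\mapsto s+t$. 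The induction step uses the Markov property of $X^*$ at time $t_{p-1}$: write
\[
{\mbf E}\bigl[e^{-\sum_{i=1}^{p-1}\lan X^*_{t_i},f_i\ran}\,{\mbf E}\bigl(e^{-\lan X^*_{t_p},f_p\ran}\big|\mathcal F_{t_{p-1}}\bigr)\bigr]
={\mbf E}\bigl[e^{-\sum_{i=1}^{p-1}\lan X^*_{t_i},f_i\ran-\lan X^*_{t_{p-1}},U_{t_p-t_{p-1}}f_p\ran}\bigr],
\]
and apply the inductive hypothesis with $f_{p-1}$ replaced by $f_{p-1}+U_{t_p-t_{p-1}}f_p$. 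A direct manipulation of (\ref{6.2}) shows the resulting $\omega$ is precisely the solution to the above discrete-forcing equation for $p$ data points.

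Next, I would take the weighted occupation functional $\int_0^\infty h(t)\lan X^*_t,f\ran dt$ and approximate it by Riemann sums $\sum_{i=1}^{p}(t_i-t_{i-1})h(t_{i-1})\lan X^*_{t_{i-1}},f\ra$ along a nested sequence of partitions of $[0,T]$ followed by $T\to\infty$. Applying the finite-dimensional identity with $f_i=(t_i-t_{i-1})h(t_{i-1})f$ gives
\[
{\mbf E}e^{-\sum_{i=1}^{p}(t_i-t_{i-1})h(t_{i-1})\lan X^*_{t_{i-1}},f\ran}=e^{-\lan X^*_0,\omega^{(p)}_0\ran},
\]
with $\omega^{(p)}$ satisfying the forcing $\sum_{i=1}^{p}(t_i-t_{i-1})h(t_{i-1})f(x-t_{i-1}+t)1_{\{x>t_{i-1}-t\}}$. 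As the mesh shrinks and $T\to\infty$, the forcing converges pointwise and in a $\rho$-dominated sense to $\int_t^\infty h(s)f(x-s+t)1_{\{x>s-t\}}ds$, and the integral term on the left of the equation passes to the limit by dominated convergence using the Lipschitz estimate $|1-e^{-u}-(1-e^{-v})|\le|u-v|$ together with the local $\rho$-boundedness estimate (\ref{6.3}). A monotone class / dominated convergence argument on the probability side, using that $e^{-\lan X^*_0,\omega^{(p)}_0\ran}$ is uniformly bounded and $\omega^{(p)}_0\to\omega_0$, yields (\ref{occupation2}).

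The main obstacle, and the step I would handle most carefully, is the passage to the limit of $\omega^{(p)}$ to a solution of the continuous-time integral equation and the uniqueness of that solution. Uniqueness is essentially a rerun of Lemma~\ref{t6.1}: if $\omega$ and $\tilde\omega$ are two locally $\rho$-bounded solutions, the Lipschitz bound on $1-e^{-u}$ and the assumption $\la\Pi^+,\rho\ra=\psi'(\Phi(0))<c$ (noted in the proof of Theorem~\ref{tborelc}) force $\|\rho^{-1}(\omega-\tilde\omega)\|_{[t,T]}\le c^{-1}\la\Pi^+,\rho\ra\|\rho^{-1}(\omega-\tilde\omega)\|_{[t,T]}$, hence the difference vanishes. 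Existence of the limit is obtained by the same Picard iteration used in Proposition~\ref{t6.2}, and monotonicity in the forcing ensures the $\omega^{(p)}_0$ converge to $\omega_0$. Once uniqueness and convergence are in hand, the identification of the limit as the solution of the stated equation completes the proof, exactly as in Corollary~9 of \cite{[Le99]}.
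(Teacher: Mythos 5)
Your proposal follows essentially the same route as the paper, which itself just invokes the proof of Proposition 6.3 (induction plus the Markov property from Theorem 5.4 to get the finite-dimensional Laplace functionals, Riemann-sum approximation, and the uniqueness/limit identification in the style of Corollary 9 in Section II.3 of Le Gall) with $\Pi$ replaced by $\Pi^+$; you have merely filled in the details the paper omits. One cosmetic slip: $\la\Pi^+,\rho\ra$ equals $c-\psi'(\Phi(0))$, not $\psi'(\Phi(0))$, but the inequality $\la\Pi^+,\rho\ra<c$ you actually use is correct, so the argument stands.
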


Observe that the total occupation time $\int_0^\infty \lan X^*_t, 1\ran
dt$ is just the sum of the sizes of all the jumps of process $S^*$ before
time $\tau^-_0$ together with $S^*_{\tau^-_0-}$. Further, this sum is
equal to $c\tau^-_0$ since $S^*_0=0$. By Lemma \ref{two-sided-exitB} we
then have
 \begin{eqnarray}
{\bf E} e^{-q\int_0^\infty \lan X^*_t, 1\ran dt}
 =
{\bf E} e^{-qc\tau^-_0}
 =
1-\frac{qc}{\Phi(qc)}W^{(qc)}(0)
 =
1-\frac{q}{\Phi(qc)}.
 \end{eqnarray}


\section{Connections with the CMJ model}

\setcounter{equation}{0}

Informally, the Crump-Mode-Jagers branching processes or the CMJ process
counts the size of a branching population system with random
characteristics. Informally, a particle, say $x$, of this process is
characterized by there random process
$$
(\lz_x, \zeta_x(\cdot), \omega_{x})
$$
which is an i.i.d. copy of $(\lz, \zeta(\cdot), \omega)$ and the
reproduction scheme is given in the following sense: if $x$ was born
at time $\sigma_x$, then
 \begin{enumerate}

\item $\lz_x$ is the life length of $x$;

\item $\zeta_x(\cdot)=\{0<\zeta_x^1<\zeta_x^2<\cdots<\lz_x\}$ is a
    point process defined on $(0,\lz_x)$.
    $\{\zeta_x^i+\sigma_x:i=1,\cdot\}$ is the collection of splitting
    times of $x$ at which it produces offspring.

\item $\omega_{x}^i$ is the number of children produced by $x$ at time
    $\sigma_x+\zeta_x^i$.

 \end{enumerate}
Let $Z(t)$ denote the total number of individuals in the system
at time $t$ with $Z(0)$ ancestors. In general, the process $\{Z(t): t\geq0\}$ is
not Markovian unless $\lz_x$ is exponentially distributed. Now assume
that
 \begin{enumerate}
\item The distribution of $\lz$ is determined by a probability measure
    $\eta(dx)$ on $(0,\infty)$;

\item $\zeta(\cdot)$ is a Poisson point process with parameter
    $\alpha$;

\item The distribution of $\omega^i$ is determined by a generating
    function $g(\cdot)$.

 \end{enumerate}
According to the argument in Section 2 and \cite{[DGL02]}, we may define
a measure-valued Markov process $Y=\{Y(t):t\geq0\}$ with transition
probabilities given by
 \beqlb\label{CJM2.10}
\int_{N(0,\infty)}e^{-\la\nu,f\ra}Q_t(\mu,d\nu)
 =
e^{-\la\mu,U_tf\ra},\qquad f\in B^{+}(0,\infty),
 \eeqlb
where $(t,x)\mapsto U_tf(x)$ is the unique locally bounded positive
solution of
 \beqlb\label{CJM2.11}
U_tf(x) = f(x-t)1_{\{x>t\}} + \az\int_0^t 1_{\{x> t-s\}} \big[1 -
g(\la\eta,e^{-U_sf}\ra)\big]ds.
 \eeqlb
Then the CMJ process $\{Z(t): t\geq0\}$ is just the total mass process of
$Y$; i.e. $Z(t)= \la Y(t),1\ra$.

\medskip

The connection between L\'evy processes and CMJ processes was first
investigated by Lambert in \cite{[L10]} which showed that the contour
process of a splitting tree defined from a suitable CMJ process is a
spectrally positive L\'evy process with negative drift killed when it hits
0. The starting position of the L\'evy process is just the life time of
the ancestor. Equivalently, given such a L\'evy process, one could
construct a CMJ process; see also \cite{[LSZ11]}. In those works, the
L\'evy measure, say $\gamma$, is assumed to be a $\sigma$-finite measure
on $(0,\infty]$ with $\int_{(0,\infty]}1\wedge z\gamma(dz)<\infty$.
 Our main result, Theorem
\ref{t6.2}, also gives similar relationships
between one-sided L\'evy processes of bounded variation and CMJ
processes.

\bigskip

\noindent\textbf{Acknowledgments.} Hui He wants to thank Concordia university
for his pleasant stay at Montreal where this work was done. We would like to
thank Professor Wenming Hong for his enlightening discussions. We also thank
Amaury Lambert for suggesting the time reversal treatment of the model in
Section~5.

\bigskip\bigskip

\noindent\textbf{\large References}

\medskip

 \begin{enumerate}

 \renewcommand{\labelenumi}{[\arabic{enumi}]}

\bibitem{B96}{} J. Bertoin (1996): \textit{L\'evy processes},
    Cambridge University Press.

\bibitem{Be97}{} J. Bertoin (1997): Exponential decay and ergodicity
    of completely asymmetric L\'evy processes in a finite interval,
    \textit{Ann. Appl. Probab.} {\bf 7}, 156-169.

\bibitem{[DGL02]}{} D. A. Dawson, L. G. Gorostiza and Z. Li (2002):
    Non-local branching superprocesses and some related models,
    \textit{Acta Appl. Math.} {\bf 74}, 93-112.

\bibitem{[DuL02]}{} T. Duquesne and J.-F. Le Gall (2002):
    \textit{Random Trees, L\'{e}vy Processes and Spatial Branching
    Processes}, Ast\'{e}risque \textbf{281}.

\bibitem{[DuL05]}{} T. Duquesne and J.-F. Le Gall  (2005):
    {Probabilistic and fractal aspects of L\'{e}vy trees},
    \textit{Probab. Theory Relat. Fields} {\bf131}, 553-603.

\bibitem{[D91]}{} R. A. Doney (1991): Hitting probabilities for
    spectrally positive L\'evy processes, \textit{J. London Math.
    Soc.} {\bf s2-44}(3), 566-576.

\bibitem{[Dw75]}{} M. Dwass (1975): Branching processes in simple
    random walk, \textit{Proc. Amer. Math. Soc.} {\bf 51}, 270-274.

\bibitem{[HW09]}{} W. Hong and H. Wang (2010): Branching structure for
    an (L-1) random walk in random environment and its applications,
    \textit{arXiv:1003.3731v1}.

\bibitem{[HW10]} W. Hong and H. Wang (2010): Intrinsic branching
    structure within random walk on $\mathbb{Z}$, \textit{arXiv:
    1012.0636v1}.

\bibitem{[HZ10]}{} W. Hong and L. Zhang (2010): Branching structure
    for the transient (1;R)-random walk in random environment and its
    applications, \textit{Infin. Dimens. Anal. Quantum Probab. Relat.
    Top.} {\bf 13}, 589-618.

\bibitem{[KKS75]}{} H. Kesten, M.V. Kozlov, F. Spitzer (1975): A limit
    law for random walk in a random environment, \textit{Composit.
    Math.} {\bf 30}, 145-168.

\bibitem{[Key87]}{} E. S. Key (1987): Limiting distributions and
    regeneration times for multitype branching processes with
    immigration in a random environment, \textit{Ann. Probab.} {\bf
    15}, 344-353.

\bibitem{[Ky]}{} A. E. Kyprianou (2006): \emph{Introductory lectures
    on fluctuations of L\'evy processes with applications},
    Universitext. Springer, Berlin.

\bibitem{[L10]}{} A. Lambert (2010): The contour of splitting trees is
    a L\'evy process, \textit{Ann. Probab.} {\bf 38}, 348-395.

\bibitem{[LSZ11]}{} A. Lambert, F. Simatos, B. Zwart (2011): Scaling
    limits via excursion theory: Interplay between Crump-Mode-Jagers
    branching processes and Processor-Sharing queues, \textit{
    arXiv:1102.5620}.

\bibitem{[Le89]}{} J.-F. Le Gall (1989): Marches aleatoires, mouvement
    brownien et processus de branchement, \textit{Lect. Notes Math.}
    {\bf 1372}, 258-274.

\bibitem{[Le99]}{} J.-F. Le Gall (1999): \textit{Spatial branching
    processes, random snakes and partial differential equations},
    Birkh$\Ddot{\text{a}}$user.

\bibitem{[LL98a]}{} J.-F. Le Gall and J.-F. Le Jan (1998): {Branching
    processes in L\'evy processes: The exploration process},
    \textit{Ann. Probab.} {\bf26}, 213-252.

\bibitem{[LL98b]}{} J.-F. Le Gall and J.-F. Le Jan (1998): Branching
    processes in L\'evy processes: Laplace functionals of snake and
    superprocesses, \textit{Ann. Probab.} {\bf 26}, 1407-1432.

\bibitem{[Li11]}{} Z. Li (2011): \textit{Measure-Valued Branching
    Markov Processes}, Springer.

\bibitem{[NP89]}{} J. Neveu and J. W. Pitman (1989): The branching
    process in a Brownian excursion, \textit{Lect. Notes Math.} {\bf
    1372}, 248-257.

\bibitem{[S99]}{} K. Sato (1999): \textit{L\'{e}vy Processes and
    Infinitely Divisible Distributions}, Cambridge University Press.

\end{enumerate}

\bigskip\bigskip

\noindent{\small Hui He and Zenghu Li: Laboratory of Mathematics and
Complex Systems, School of Mathematical Sciences, Beijing Normal
University, Beijing 100875, People's Republic of China. \\
\textit{E-mail:} {hehui@bnu.edu.cn} and {lizh@bnu.edu.cn}}

\bigskip

\noindent{\small Xiaowen Zhou: Department of Mathematics and Statistics,
Concordia University, 1455 de Maisonneuve Blvd. West,
Montreal, Quebec, H3G 1M8, Canada. \\
\textit{E-mail:} {xzhou@mathstat.concordia.ca}}

\end{document}